\newcommand{\R}{\mathbf{R}}
\newcommand{\Z}{\mathbf{Z}}
\newcommand{\T}{\mathbf{T}}
\newcommand{\seq}[1]{\mathtt{#1}}
\newcommand{\PSL}{\operatorname{PSL}}
\newtheorem{thm}{Theorem}[section]
\newtheorem{fact}[thm]{Fact}
\newtheorem{prop}[thm]{Proposition}
\newtheorem{example}[thm]{Example}
\newtheorem{problem}[thm]{Problem}
\title[Amenable equivalence relations]
{A brief introduction to amenable equivalence relations}
\keywords{amenable, equivalence relation, free group, hyperfinite}
\newcommand{\SL}{\operatorname{SL}}
\subjclass[2010]{Primary: 43A07; Secondary: 20F65}
\thanks{
This research was supported in part by 
NSF grant DMS--1262019.
I would like to thank Matt Brin and Clinton Conley for
their help while preparing this article.
}
\author{Justin Tatch Moore}
\address{Department of Mathematics \\ Cornell University\\
Ithaca, NY 14853--4201 \\ USA}
\email{{\tt justin@math.cornell.edu}}
\begin{document}



\maketitle

\noindent

\section{Introduction}

The notion of an \emph{amenable equivalence relation} was introduced
by Zimmer in the course of his analysis of orbit equivalence relations in ergodic theory (see \cite{KechrisMiller}).
More recently it played an important role in Monod's striking family of examples
of nonamenable groups which do not contain nonabelian free subgroups.
If $A$ is a subring of $\R$, define $H(A)$ to be the group of all piecewise $\PSL_2(A)$
homeomorphisms of the real projective line which fix the point at infinity.

\begin{thm}\cite{pw_proj_homeo} \label{Monod_thm}
If $A$ is any dense subring of $\R$, then $H(A)$ is nonamenable.
Moreover, if $f,g \in H(\R)$,
then either $\langle f,g \rangle$ is metabelian or else 
contains an infinite rank free abelian subgroup.
In particular, $H(\R)$ does not contain a nonabelian free subgroup.
\end{thm}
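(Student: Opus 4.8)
The plan is this: of the three assertions, I would prove the dichotomy for two-generator subgroups of $H(\R)$ and, separately, the nonamenability of $H(A)$, and deduce the ``in particular'' statement formally from the dichotomy. The dichotomy I approach by analysing the behaviour of elements near the fixed point $\infty$. Every $f\in H(\R)$ is orientation-preserving (each $\PSL_2(\R)$-piece is) and, fixing $\infty$, coincides with an affine map on a neighbourhood of $+\infty$ and with an affine map on a neighbourhood of $-\infty$; recording the resulting pair of germs gives a homomorphism $\sigma\colon H(\R)\to\operatorname{Aff}_+(\R)\times\operatorname{Aff}_+(\R)$ whose target is metabelian and whose kernel is precisely the subgroup of compactly supported elements. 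Two elementary facts will be used repeatedly: a nontrivial orientation-preserving homeomorphism of $\R$ has infinite order, and homeomorphisms with pairwise disjoint supports commute, so any infinite family of nontrivial ones with pairwise disjoint supports generates a free abelian group of infinite rank. Since $\operatorname{Aff}_+(\R)^2$ is metabelian, for $G=\langle f,g\rangle$ the second derived subgroup $G''$ lies in $\ker\sigma$ and so consists of compactly supported maps.

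If $G''=1$ then $G$ is metabelian. Otherwise fix a nontrivial $h\in G''$ with compact support $S$; the aim is to produce infinitely many $G$-conjugates of $h$ with pairwise disjoint supports, which then generate the required infinite-rank free abelian subgroup. The mechanism is a displacement principle: if some $t\in G$ has no fixed point in $[\min S,\infty)$, then a suitable power $\gamma$ of $t$ or of $t^{-1}$ carries $S$ entirely to its right, the iterates $\gamma^n(S)$ ($n\ge1$) are pairwise disjoint, and the conjugates $\gamma^n h\gamma^{-n}$ generate the desired group; the same works toward $-\infty$, and more generally any sequence of $G$-translates of $S$ escaping to $\pm\infty$ with pairwise disjoint images suffices. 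The work is thus to manufacture such a configuration from $G''\neq1$: if $G$ preserves no bounded subset of $\R$ then the $G$-orbit of $S$ is unbounded and should supply escaping translates, whereas if $G$ does preserve a bounded set then $G$ fixes a further point of $\R$ and one recurses, $G$ now sitting inside the subgroup of $H(\R)$ that fixes two points with a correspondingly larger metabelian-valued germ homomorphism. The step I expect to be the main obstacle is forcing the escaping translates to be \emph{pairwise disjoint}: the bad configuration is that the translates $\gamma_n(S)$ stretch --- reaching out toward $\infty$ while still meeting a fixed bounded region --- so that no two of them are disjoint, and an element of $H(\R)$ can do exactly this despite being affine near $\infty$. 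Ruling this out should require the finite generation of $G$, the affine form of the germs at $\infty$, and the membership of $h$ in $G''$ rather than its being an arbitrary compactly supported map. Granting the dichotomy, the ``in particular'' clause is immediate: a nonabelian free group is not metabelian and, since every abelian subgroup of a free group is cyclic, contains no infinite-rank free abelian subgroup; hence no two elements of $H(\R)$ generate a nonabelian free group, so neither $H(\R)$ nor any $H(A)\le H(\R)$ contains one.

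Nonamenability of $H(A)$ for dense $A$ cannot be witnessed by a free subgroup, so it needs a genuinely different idea, and this is the part I expect to be hardest --- the real content of Monod's theorem. I would try to transfer paradoxicality from projective dynamics: since $\Z\subseteq A$, the group $\PSL_2(\Z)\subseteq\PSL_2(A)$ contains free Fuchsian subgroups $\Gamma_0$ whose limit set $\Lambda\subseteq\mathbf{RP}^1$ carries a $\Gamma_0$-paradoxical decomposition, and one wants elements of $H(A)$ that reproduce enough of this decomposition to deny $H(A)$ any invariant mean on $\Lambda$, hence to force nonamenability. The obstruction is that a M\"obius transformation $a$ with $a^{-1}(\infty)\neq\infty$ can be placed into $H(A)$ only after editing it along an arc joining $\infty$ to $a^{-1}(\infty)$, and such an arc necessarily meets $\Lambda$; so the realised maps are not free (consistent with the first part) and do not act on all of $\Lambda$ the way $\Gamma_0$ does. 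Arranging the editing to be compatible with an honest paradoxical decomposition --- by positioning $\infty$ relative to $\Lambda$, enlarging the generating set, and exploiting the density of $A$ to route the editing ``corridors'' through gaps of $\Lambda$ --- is the remaining difficulty, and there I would follow Monod's construction in \cite{pw_proj_homeo}.
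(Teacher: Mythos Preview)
Your approach to the nonamenability of $H(A)$ misses the central idea, and this is precisely the point the paper is built around. You propose to manufacture a paradoxical decomposition inside $H(A)$ by editing M\"obius transformations near $\infty$ so as to mimic a free Fuchsian group on its limit set, and you correctly identify the obstruction: the editing corridors must cross the limit set, so the resulting elements do not act freely there. You then say you would ``follow Monod's construction'' to overcome this --- but Monod's construction does no such thing. The whole point (and the stated purpose of this survey) is that nonamenability is established \emph{not} by exhibiting paradoxical behaviour inside $H(A)$, but by passing to the orbit equivalence relation.

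The argument the paper presents is this. First, for any $\alpha\in\PSL_2(A)$ not fixing $\infty$, one modifies $\alpha$ outside a compact interval to coincide with a translation $t\mapsto t+r$ (for suitable integer $r$), obtaining an element $\alpha_r\in H(A)$ that agrees with $\alpha$ on that interval. This shows that the orbit equivalence relations of $\PSL_2(A)$ and of $H(A)$ acting on $\R$ are identical. Second, since $A$ is dense, $\PSL_2(A)$ is a nondiscrete subgroup of $\PSL_2(\R)$ and by Ghys--Carri\`ere (Theorems~\ref{dense_to_free} and~\ref{free_to_nonamen}, or Theorem~\ref{abstract_GC}) its orbit equivalence relation on $\R$ is not $\lambda$-amenable. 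Third, by Theorem~\ref{amenEq_equivs}, every orbit equivalence relation of an amenable group is $\mu$-amenable for any $\mu$. Combining these: if $H(A)$ were amenable, its orbit equivalence relation on $\R$ would be $\lambda$-amenable, but that relation equals the non-$\lambda$-amenable $\PSL_2(A)$-relation --- contradiction. No free subgroup of $H(A)$, and no paradoxical decomposition within $H(A)$, is ever constructed; the nonamenability is inherited through the \emph{equivalence relation}, which is insensitive to which group generated it.

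As for the dichotomy (metabelian or containing an infinite-rank free abelian subgroup), the paper does not prove it at all --- it is simply cited from \cite{pw_proj_homeo}. Your germ-homomorphism outline is in the right spirit, and the ``in particular'' deduction is fine, but the step you flag as the main obstacle (forcing escaping translates of the support to be pairwise disjoint) is indeed where the work lies, and your sketch does not yet resolve it.
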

\noindent
Subsequently, Lodha and the author constructed a finitely presented nonamenable subgroup
of $H(\Z[1/\sqrt{2}])$ \cite{vN_fp}.
Lodha has since shown that the group of \cite{vN_fp} is moreover of type $F_\infty$ \cite{vN_Finfty}.

At least from a group-theoretic perspective, the most novel aspect of \cite{pw_proj_homeo}
was the use of Zimmer's notion of an amenable equivalence relation in the proof of the nonamenability of 
the groups $H(A)$.
The purpose of this article is to give a brief survey of the theory of
amenable and hyperfinite equivalence relations
and illustrate how it can be used to show that certain discrete groups are nonamenable.

The subject matter falls within the broader scope of what is sometimes called \emph{measurable group theory} ---
the study of groups through the analysis of their action on measure spaces.
This is in contrast with \emph{geometric group theory}, where the emphasis is on
actions which preserve an underlying geometry.
Measurable group theory is closely aligned with the ergodic theory and dynamics of discrete groups, probability,
and descriptive set theory.
Further reading can be found in \cite{DJK}, \cite{ICM:Gaboriau}, \cite{JKL}, \cite{KechrisMiller} and their references.
Further background on descriptive set theory can be found in \cite{set_theory:Kechris}.

This article is organized as follows.
After reviewing some background material and fixing some terminology, we will
present the definitions of amenable and hyperfinite equivalence relations in Section \ref{hyp_intro}.
This section will culminate with a theorem connecting these two apparently different notions.
Section \ref{examples_sec} will present several examples of nonamenable equivalence relations.
Section \ref{closure_sec}
will discuss the analogs of the closure properties of amenable groups in the setting of equivalence relations.
These played an important role in the isolation of the group in \cite{vN_fp}.

This article does not contain any new results, although Theorem \ref{abstract_GC} below is cast in a more
abstract way than in \cite{GhysCarriere}.
(It is also to the author's knowledge, the first account of this proof in English.)
The article's goal is to encourage the reader to pursue further reading in, e.g.,
\cite{KechrisMiller} which contains a much more complete treatment of the subject matter presented here.

\section{Preliminaries}

Before proceeding, we will fix some terminology.
In a few places we will refer to the continuity of extended real valued functions.
In this context, the neighborhoods of infinity are the co-bounded sets.
Recall that a \emph{Polish space} is a topological space which is separable and completely metrizable. 
The $\sigma$-algebra of Borel sets in a Polish space is said to be a \emph{standard Borel space}.
A function $f$ between standard Borel spaces $X$ and $Y$ is \emph{Borel} if preimages of Borel
sets are Borel.
This is equivalent to the graph of $f$ being a Borel subset of $X \times Y$.
It is well known that any two uncountable standard Borel spaces are isomorphic in the sense that there is 
a bijection $f$ between them such that $f$ and $f^{-1}$ are Borel.

A \emph{Borel measure} on a standard Borel space is a countably additive $\sigma$-finite measure
defined on its Borel sets.
Such a measure extends uniquely to the $\sigma$-algebra generated by the Borel sets and the
subsets of measure 0 Borel sets.
We will generally not distinguish between these measures but note here that \emph{measurable} will always refer
to the larger $\sigma$-algebra.

In this article we will write $(X,\mu)$ is a \emph{measured Polish space} to mean that $X$ is a Polish space
and that $\mu$ is a Borel measure on $X$.
If in addition the topology on $X$ is generated by the open sets of finite measure, then we say that
$(X,\mu)$ is \emph{locally finite}.
If $\Gamma$ is a topological group, then we will say that $\Gamma$ acts continuously on a measured
Polish space $(X,\mu)$
if:
\begin{itemize}

\item the map $(g,x) \mapsto g \cdot x$ is continuous and

\item the maps $g \mapsto \mu(g \cdot E)$ are continuous for each measurable $E \subseteq X$.

\end{itemize}
Notice that this is stronger than the assertion that $\Gamma$ acts continuously on the metric space $X$.

We note some useful facts about measured Polish spaces.

\begin{fact} \label{weak_Radon}
If $(X,\mu)$ is a measured Polish space and $E \subseteq X$ is measurable, then
$\mu(E)$ is the supremum of all $\mu(F)$ where $F$ is a closed subset of $E$.
\end{fact}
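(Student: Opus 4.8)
The plan is to establish the (trivial) inequality $\mu(F)\le\mu(E)$ for $F\subseteq E$, and then to prove the reverse approximation in three stages: reduce to a finite measure using $\sigma$-finiteness, prove inner regularity by closed sets for finite Borel measures via a ``good sets'' argument, and finally pass from Borel sets to the measurable ($\mu$-completed) sets.

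\emph{Reduction to a finite measure.} Since $\mu$ is $\sigma$-finite, write $X=\bigcup_n X_n$ with $X_n$ Borel, increasing, and $\mu(X_n)<\infty$. Then $E\cap X_n$ increases to $E$, so continuity from below gives $\mu(E)=\lim_n\mu(E\cap X_n)$. Any closed subset of $E\cap X_n$ is a fortiori a closed subset of $E$, so if $\mu(E)=\infty$ there is nothing more to prove (closed subsets of $E$ of arbitrarily large measure already exist), and if $\mu(E)<\infty$ it suffices, given $\varepsilon>0$, to find $n$ with $\mu(E\cap X_n)>\mu(E)-\varepsilon/2$ and then a closed $F\subseteq E\cap X_n$ with $\mu(F)>\mu(E\cap X_n)-\varepsilon/2$. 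Applying this to the \emph{finite} Borel measure $\nu(B):=\mu(B\cap X_n)$ (note $\mu(F)=\nu(F)$ once $F\subseteq X_n$) reduces the problem to: for a finite Borel measure $\nu$ on a Polish space $X$ and a $\nu$-measurable $E$, there is for each $\varepsilon>0$ a closed $F\subseteq E$ with $\nu(F)>\nu(E)-\varepsilon$.

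\emph{The finite case.} Fix a complete metric $d$ on $X$ and let $\mathcal A$ be the family of $\nu$-measurable sets $E$ such that for every $\varepsilon>0$ there are a closed $F$ and an open $U$ with $F\subseteq E\subseteq U$ and $\nu(U\setminus F)<\varepsilon$. Every closed $C$ lies in $\mathcal A$: take $F=C$ and $U$ a metric neighborhood $\{x:d(x,C)<\delta\}$, which decreases to $C$ as $\delta\downarrow 0$, so $\nu(U\setminus C)\to 0$ by continuity from above (valid since $\nu$ is finite). The family $\mathcal A$ is closed under complementation because its defining condition is symmetric under $C\leftrightarrow X\setminus C$ and $U\leftrightarrow X\setminus U$. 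It is closed under countable unions: for $E=\bigcup_n E_n$ with $E_n\in\mathcal A$, pick $F_n\subseteq E_n\subseteq U_n$ with $\nu(U_n\setminus F_n)<\varepsilon 2^{-n-1}$; then $U=\bigcup_n U_n$ is open with $\nu(U\setminus E)<\varepsilon/2$, and $\nu(E\setminus\bigcup_n F_n)<\varepsilon/2$, so continuity from above lets us choose $N$ with $\nu(E\setminus\bigcup_{n\le N}F_n)<\varepsilon/2$; the set $F=\bigcup_{n\le N}F_n$ is closed and $\nu(U\setminus F)<\varepsilon$. Hence $\mathcal A$ is a $\sigma$-algebra containing the closed sets, so it contains every Borel set; and it contains every $\nu$-measurable set, since such a set can be sandwiched between two Borel sets differing by a $\nu$-null set, to which the Borel case applies.

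The only step that requires genuine care is the closure of $\mathcal A$ under countable unions: a countable union of closed sets need not be closed, and the remedy — truncating to a finite subunion and absorbing the tail error via continuity from above — is precisely where finiteness of the measure is used, which is also the reason the initial reduction to $\nu$ (rather than working with $\mu$ directly) is needed. The remaining manipulations are routine $\varepsilon 2^{-n}$ bookkeeping.
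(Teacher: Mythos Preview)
The paper states this as a \emph{Fact} without proof, treating it as standard background (it is essentially the inner-regularity half of the statement that finite Borel measures on Polish spaces are Radon, extended to the $\sigma$-finite case). Your argument is correct and is the standard proof: the good-sets argument with simultaneous inner-closed/outer-open approximation is exactly how one shows this, and your handling of the $\sigma$-finite reduction and the passage to the completed $\sigma$-algebra is clean. There is nothing further to compare.
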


\begin{fact} \label{weak_density}
Suppose that $(X,\mu)$ is a locally finite measured Polish space.
If $E \subseteq X$ is measurable and has positive measure, then for every $\epsilon > 0$ there is an open set
$U \subseteq X$ such that $0 < \mu(U) < \infty$ and $\mu(E \cap U) > (1-\epsilon) \mu(U)$.
\end{fact}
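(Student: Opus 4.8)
The plan is to reduce to the situation where the ambient measure is finite, and then to obtain $U$ as the complement, inside a fixed finite-measure open set, of a closed set supplied by Fact~\ref{weak_Radon}. Since $X$ is Polish it is second countable, hence Lindel\"of, and since $(X,\mu)$ is locally finite its topology is generated by open sets of finite measure; thus $X$ is the union of countably many open sets $V_1, V_2, \dots$, each of finite measure. As $E = \bigcup_n (E \cap V_n)$ has positive measure, countable additivity gives some $n$ with $\mu(E \cap V_n) > 0$. Fix such a set, call it $V$, and put $c := \mu(E \cap V)$, so that $0 < c \le \mu(V) < \infty$; the set $U$ will be found inside $V$.

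Given $\epsilon > 0$, I would choose $\delta$ with $0 < \delta < \epsilon c$ and apply Fact~\ref{weak_Radon} to the measurable set $V \setminus E$, which has finite measure, to obtain a closed set $C \subseteq V \setminus E$ with $\mu(C) > \mu(V \setminus E) - \delta$. Then $U := V \setminus C$ is open (an open set minus a closed set), it contains $E \cap V$ because $C$ is disjoint from $E$, and $0 < c \le \mu(U) \le \mu(V) < \infty$. Since $C \subseteq V \setminus E$ has finite measure, $\mu(U \setminus E) = \mu((V \setminus E) \setminus C) = \mu(V \setminus E) - \mu(C) < \delta$, and therefore
\[
\mu(E \cap U) = \mu(U) - \mu(U \setminus E) > \mu(U) - \delta > \mu(U) - \epsilon c \ge (1-\epsilon)\mu(U),
\]
where the last inequality uses $c \le \mu(U)$. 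Thus $U$ has all the required properties.

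The one point requiring care --- and the place where I expect the only real obstacle --- is that Fact~\ref{weak_Radon} gives \emph{inner} regularity by closed sets, while the statement wants an \emph{open} set capturing most of $E$ (equivalently, an open superset of $E$ with small excess measure). Converting between the two by complementation is legitimate only after everything has been localized inside a set $V$ of finite measure, so that $\mu(V \setminus E)$ is finite and the difference $\mu(V \setminus E) - \mu(C)$ is both meaningful and controllably small; this is exactly where the hypothesis that $(X,\mu)$ is locally finite, rather than merely a measured Polish space, enters. Once that localization is in place, the remaining estimates are routine bookkeeping.
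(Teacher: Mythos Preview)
The paper states Fact~\ref{weak_density} without proof, so there is no argument in the paper to compare against. Your proof is correct: the reduction to a finite-measure open set $V$ via Lindel\"of plus local finiteness is the right move, and the complementation trick---approximating $V\setminus E$ from inside by a closed set $C$ and taking $U=V\setminus C$---cleanly converts the inner regularity of Fact~\ref{weak_Radon} into the outer-type estimate you need. The bookkeeping with $\delta<\epsilon c$ and $c\le\mu(U)$ is sound, and your closing paragraph correctly identifies why local finiteness is essential rather than incidental.
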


We will also need the following proposition.

\begin{prop} \label{overlap}
Suppose that $(X,\mu)$ is a locally finite measured Polish space and $\Gamma$ is a metrizable group acting
continuously on $(X,\mu)$.
If $E \subseteq X$ is a measurable set of positive measure, then there is an open neighborhood $V$ of the identity
of $\Gamma$ and an $\epsilon > 0$ such that if $g$ is in $V$, then $\mu((g \cdot E) \cap E) > \epsilon$. 
\end{prop}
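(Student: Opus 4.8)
The plan is to reduce everything to a single open set $U$ of finite measure on which $E$ is ``almost dense,'' and then bound $\mu((g\cdot E)\cap E)$ from below by working entirely inside $U$. First, apply Fact~\ref{weak_density} with $\epsilon = 1/10$ to obtain an open set $U$ with $0 < \mu(U) =: \delta < \infty$ and $\mu(E\cap U) > \tfrac{9}{10}\delta$. The naive hope would be that for $g$ near the identity, $g\cdot E$ still meets $U$ in almost all of $U$; but since the action need not preserve $\mu$, we have no way to compare $g\cdot U$ directly with $U$ --- two open sets of nearly equal finite measure can be almost disjoint, and none of the continuity hypotheses prevents the mass of $g\cdot U$ from drifting out of $U$ as $g\to e$. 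This boundary effect is the main obstacle, and the device for getting around it is to pass from $E\cap U$ to a compact subset.

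Accordingly, second, choose a compact set $K\subseteq E\cap U$ with $\mu(K) > \tfrac{4}{5}\delta$; this is possible because a $\sigma$-finite Borel measure on a Polish space is inner regular with respect to compact sets on sets of finite measure (a standard strengthening of Fact~\ref{weak_Radon}; see \cite{set_theory:Kechris}), applied to $\mu$ restricted to the finite-measure set $E\cap U$. Note that $K\subseteq E$ and $K\subseteq U$.

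Third, I would establish the following stability statement. Since $(g,x)\mapsto g\cdot x$ is continuous, the set $\{(g,x)\in\Gamma\times X : g\cdot x\in U\}$ is open and contains the compact slice $\{e\}\times K$ (because $e\cdot x = x\in K\subseteq U$), so by the tube lemma there is an open neighborhood $V_1$ of the identity with $g\cdot K\subseteq U$ for every $g\in V_1$. It is precisely here that compactness of $K$ is used: the analogous statement fails for closed non-compact sets, which is why Fact~\ref{weak_Radon} alone does not suffice. Fourth, using continuity of $g\mapsto\mu(g\cdot K)$ at the identity, where $\mu(e\cdot K) = \mu(K) > \tfrac45\delta$, choose an open neighborhood $V_2$ of the identity with $\mu(g\cdot K) > \tfrac34\delta$ for all $g\in V_2$.

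Finally, set $V = V_1\cap V_2$ and $\epsilon = \delta/4 > 0$. For $g\in V$, both $K$ and $g\cdot K$ lie in $U$, so $\mu((g\cdot K)\cup K)\le\delta$, and hence
\[
\mu((g\cdot E)\cap E)\ \ge\ \mu((g\cdot K)\cap K)\ =\ \mu(g\cdot K)+\mu(K)-\mu((g\cdot K)\cup K)\ >\ \tfrac34\delta+\tfrac45\delta-\delta\ >\ \tfrac14\delta\ =\ \epsilon,
\]
where the first inequality uses $g\cdot K\subseteq g\cdot E$ and $K\subseteq E$. The only non-routine points in this scheme are the boundary issue flagged above and the verification of the tube-lemma step; everything else is bookkeeping with the two given Facts and the continuity of $g\mapsto\mu(g\cdot K)$.
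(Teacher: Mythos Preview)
Your argument is correct, and it follows the same global outline as the paper's proof: localize to an open set $U$ of finite measure where $E$ is almost all of $U$, arrange that a large piece of $E\cap U$ stays inside $U$ after translating by $g$ near the identity, and finish with inclusion--exclusion inside $U$. The difference is in how that middle step is carried out. The paper works pointwise and metrically: for each $x\in U$ it chooses a neighborhood $W_x\subseteq U$ and a radius $\delta_x>0$ so that any $g$ within $\delta_x$ of the identity sends $W_x$ into $U$, then uses countable additivity to pick a single $\delta$ for which $\{x:\delta_x\ge\delta\}$ still has measure $>\tfrac34\mu(U)$, and sets $W=\bigcup_{\delta_x\ge\delta}W_x$. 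You instead pass to a compact $K\subseteq E\cap U$ and invoke the tube lemma to get a uniform neighborhood $V_1$ with $g\cdot K\subseteq U$ in one stroke. Your route is cleaner and, notably, never uses that $\Gamma$ is metrizable; the price is that you import inner regularity by \emph{compact} sets, which is a genuine strengthening of the stated Fact~\ref{weak_Radon} (it is of course true for finite Borel measures on Polish spaces, hence for your $\mu$ restricted to $E\cap U$). The paper's approach stays within the facts it has recorded and leans on the metric on $\Gamma$ to organize the pointwise data.
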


\begin{proof}
Let $E \subseteq X$ be given and let $U \subseteq X$ be an open set with
$$0 < \frac{3}{4} \mu(U) < \mu(E \cap U) < \mu(U) < \infty.$$
Set $\epsilon = \frac{1}{4} \mu(U)$.
Observe by our continuity assumption on the action,
we have that for every $x$ in $U$ there is an open $W_x \subseteq U$ containing $x$ and
a $\delta_x > 0$ such that if the distance from $g$ to the identity is less than $\delta_x$,
then $g \cdot W_x \subseteq U$.
Find a $\delta > 0$ such that
$$\mu(\{x \in U : \delta_x \geq \delta\}) > \frac{3}{4} \mu (U)$$
and define $W = \bigcup \{W_x : \delta_x \geq \delta\}$, observing that
$\mu(W) > \frac{3}{4} \mu (U)$.
In particular, $\mu(E \cap W) > \frac{1}{2} \mu (U)$.
By our assumption that $\Gamma$ acts continuously on $(X,\mu)$, there is an open set $V$
containing the identity such that every element of $V$ has distance less than $\delta$ to the identity and
$\mu(g \cdot (E \cap W)) > \frac{1}{2} \mu(U)$ whenever $g$ is in $V$.
Since $\mu( E \cap U ) > \frac{3}{4} \mu(U)$ and since $g \cdot (E \cap W) \subseteq (g \cdot E) \cap U$, it
follows that $\mu(E \cap (g \cdot E) \cap U) > \frac{1}{4} \mu (U) = \epsilon$.
\end{proof}

If $X$ is a standard Borel space,
an equivalence relation $E$ on $X$ is Borel if it is Borel as a subset of $X^2$.
A Borel equivalence relation is said to be \emph{countable} if every equivalence class is countable.
Notice that while this meaning conflicts with the literal interpretation of ``countable,'' there is never a
cause for confusion since for an equivalence relation to be countable as a set, it must have a countable
underlying set and in this context one is generally only interested in uncountable standard Borel spaces
(moreover the two notions coincide if the underlying space is countable).

The principal example of a countable Borel equivalence relation is as follows:
if $G$ is a countable discrete group acting by Borel automorphisms on a standard Borel space $X$,
then the orbit equivalence relation $E^G_X$ is a countable Borel equivalence relation.
That is, $(x,y)$ is in $E^G_X$ if and only if there is a $g$ in $G$ such that $g \cdot x = y$.
In fact all countable Borel equivalence relations arise in this way.

\begin{thm}\cite{FeldmanMoore}
If $E$ is a countable Borel equivalence relation on a standard Borel space, then there is a countable group
$G$ and a Borel action of $G$ on $X$ such that $E = E^G_X$.
\end{thm}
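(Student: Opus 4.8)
The plan is to reduce the statement to the following: produce a countable family $\{g_k : k \in \mathbb{N}\}$ of Borel automorphisms of $X$, each of which moves every point only within its own $E$-class, such that the graphs of the $g_k$ together with the diagonal of $X$ exhaust $E$. Granting this, let $G = \langle g_k : k \in \mathbb{N}\rangle$ be the group the $g_k$ generate inside the group of Borel automorphisms of $X$. A group generated by countably many elements is countable, being the set of finite words in the generators and their inverses; every element of $G$ is Borel and, because $E$ is transitive and symmetric, moves points only within $E$-classes, so $E^G_X \subseteq E$; and since the graphs of the $g_k$ cover $E$ off the diagonal, $E \subseteq E^G_X$. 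Hence $E = E^G_X$, and everything comes down to constructing the $g_k$.

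The engine for the construction is the Lusin--Novikov uniformization theorem (see \cite{set_theory:Kechris}): a Borel subset of $X \times X$ all of whose vertical sections are countable is a countable union of graphs of Borel partial functions from $X$ to $X$, and these graphs may be taken pairwise disjoint. I would apply this to $E \subseteq X \times X$ --- whose sections are countable precisely because $E$ is a countable equivalence relation --- and then refine the resulting functions, using the fact that the sections of $E$ in the other coordinate are countable too, so that each function can be cut in a Borel way into pieces on which it is injective. This yields Borel partial injections $\varphi_n$ with Borel domain $B_n$ and range $C_n$ such that $E$ is the union of the diagonal and the disjoint graphs of the $\varphi_n$. The diagonal is the graph of the identity automorphism, so it remains to replace each $\varphi_n$ by finitely many total Borel automorphisms of $X$ that respect $E$ and whose graphs cover the graph of $\varphi_n$.

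Fix $n$ and write $\varphi = \varphi_n$, $B = B_n$, $C = C_n$. The obstruction to extending $\varphi$ to an automorphism simply by setting it equal to the identity off $B$ is that $B$ and $C$ may overlap; this is repaired by partitioning $B$. Consider the Borel graph $H$ on vertex set $B$ in which $x$ and $y$ are adjacent when both lie in $B$ and one is the $\varphi$-image of the other. Each vertex of $H$ has at most one neighbour of the form $\varphi(x)$ and at most one of the form $\varphi^{-1}(x)$, so $H$ has degree at most $2$; by the Kechris--Solecki--Todorcevic theorem that a Borel graph of degree at most $d$ admits a Borel proper vertex-colouring with $d+1$ colours, $B$ splits into Borel sets $B = D_1 \sqcup D_2 \sqcup D_3$, each independent in $H$. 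Independence of $D_i$ says precisely that $D_i$ and $\varphi(D_i)$ are disjoint, so $\varphi \restriction D_i$ together with its inverse, extended by the identity on the complement of $D_i \cup \varphi(D_i)$, is a Borel involution of $X$; and it respects $E$ because the graph of $\varphi$ is contained in $E$. Running over all $n$ and all $i \in \{1,2,3\}$ gives the required countable family of $g_k$'s.

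I expect the Lusin--Novikov step to be the crux: it is the only place where the descriptive set theoretic hypothesis is genuinely used, and it is what makes it possible to ``enumerate'' a countable Borel equivalence relation in a Borel way at all. The colouring of $H$, the passage to involutions, and the bookkeeping with the generated group are all routine by comparison.
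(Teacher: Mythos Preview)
The paper does not prove this theorem; it is stated with a citation to \cite{FeldmanMoore} and no argument is given, so there is nothing in the paper to compare your proposal against.

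That said, your proposal is a correct proof and is essentially the standard modern argument: Lusin--Novikov to cover $E$ by graphs of Borel partial functions, a second application (using countability of the horizontal sections) to refine these to partial bijections, and then an extension of each partial bijection to a Borel involution of $X$ after splitting its domain so that domain and image are disjoint. Your use of the Kechris--Solecki--Todorcevic $(d+1)$-colouring theorem for the last step is clean and valid; note only that this result postdates \cite{FeldmanMoore}, so the original 1977 argument handled the extension step differently (and one can also do it by hand, analysing the orbit structure of the partial bijection). The bookkeeping in your final paragraph---checking that independence of $D_i$ in $H$ forces $D_i\cap\varphi(D_i)=\emptyset$, and that the resulting involutions generate exactly $E$---is correct as written.
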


The advantage of working with equivalence relations
is in part that the notion of a countable Borel equivalence relation is much 
more flexible than that of a group.
For instance while subgroups give rise to subequivalence relations, the converse is not generally true.
A more sophisticated example of this is Theorem \ref{Monod_thm}:
orbit equivalence relations are used, in a sense, to transfer the nonamenability of $\PSL_2(A)$
to the group $H(A)$ even though these groups are quite unrelated from an algebraic perspective.

\section{Amenable and hyperfinite equivalence relations}

\label{hyp_intro}

Any action of a countable group on a standard Borel space gives rise to a countable Borel equivalence relation
and, conversely, any countable Borel equivalence relation can be generated as the orbit equivalence relation
of some group action.
The fundamental problem of this subject is to understand the extent to which properties of the
group which generated a countable Borel equivalence relation are reflected in properties of the equivalence
relation and vice versa.

Our focus in this article will be to develop
the properties of equivalence relations which are analogs
of the group-theoretic property of \emph{amenability}.
Roughly speaking,
the notion of an amenable equivalence relation has the property that every action of an amenable group gives
rise to an amenable equivalence relation and a group is amenable only when every orbit equivalence relation
is amenable.

Now to be more precise.
Suppose that $(X,\mu)$ is Borel measure on a standard Borel space and
$E$ is a countable Borel equivalence relation on $X$.
We say that $E$ is \emph{$\mu$-amenable} if there is a $\mu$-measurable assignment $x \mapsto \nu_x$ such that:
\begin{itemize}

\item each $\nu_x$ is a finitely additive probability measure on $X$ satisfying that
$\nu_x([x]_E) = 1$.

\item if $(x,y) \in E$, then $\nu_x = \nu_y$.

\end{itemize}
By \emph{measurable} we mean that if $A$ is any measurable subset of $X \times X$, then
\[
x \mapsto \nu_x(\{y \in X : (x,y) \in A\})
\]
is $\mu$-measurable.
While we will generally quantify \emph{amenable} with a measure, a Borel equivalence relation is
said to be \emph{amenable} if it is $\mu$-amenable with respect to every Borel measure on the underlying
standard Borel space.
 
While it is not apparent from the definition, it is true that every orbit equivalence relation
of a countable amenable group acting on standard Borel space is
necessarily $\mu$-amenable with respect to any Borel measure $\mu$
(it is interesting to note that this does not require any invariance properties of $\mu$ with respect
to the group action).
This will follow from Theorem \ref{amenEq_equivs} below.

Next we turn to a seemingly unrelated notion.
A countable Borel equivalence relation $E$ on a standard Borel space $X$
is \emph{hyperfinite} if $E$ is an increasing union
of Borel equivalence relations with finite equivalence classes.
A good example to keep in mind is that of \emph{eventual equality} on infinite binary sequences:
define $x =^* y$ if $x(k) = y(k)$ for all but finitely many $k$.
Notice that this is the union of the equivalence relations $=^n$ defined by
$x =^n y$ if $x(k) = y(k)$ for all $k \geq n$.

The following theorem gives a powerful criterion for verifying hyperfiniteness.

\begin{thm} \cite{DJK}
Suppose that $X$ is a standard Borel space and $f:X \to X$ is a Borel function which
such that $f^{-1}(x)$ is at most countable for each $x$.
The smallest equivalence relation $E$ satisfying that, for each $x \in X$, $(x,f(x)) \in E$ is hyperfinite.
\end{thm}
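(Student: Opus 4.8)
The plan is to use the forward-orbit structure of $f$ to split $X$ into two $E$-invariant Borel pieces, dispose of one by a softer argument, and concentrate the real work on the other, where a marker argument — the natural generalization of the one showing that $\Z$-actions induce hyperfinite equivalence relations — does the job. If $X$ is countable there is nothing to prove, so assume $X$ is uncountable and, after a Borel isomorphism, that $X$ is a Borel subset of $\R$; fix the resulting linear order $<$. Write $E$ for the equivalence relation in question, so $(x,y)\in E$ iff $f^m(x)=f^n(y)$ for some $m,n\ge 0$; since $f$ is countable-to-one, each $E$-class is countable and $E$ is Borel. Partition $X$ into the set $X_{\mathrm{fin}}$ of points $x$ with $f^i(x)=f^j(x)$ for some $i<j$ and its complement $X_{\mathrm{inf}}$. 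Both sets are Borel and $E$-invariant, since whether the eventually-coinciding forward orbits of $E$-related points are finite does not depend on the representative; and if $E$ restricted to each piece is hyperfinite, then so is $E$ (take the union of the two witnessing increasing sequences). So it suffices to treat the two pieces separately.

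On $X_{\mathrm{fin}}$ each $E$-class contains a unique $f$-cycle, and sending $x$ to the $<$-least point of the cycle in $[x]_E$ is a Borel map that is constant on $E$-classes, so $E\restriction X_{\mathrm{fin}}$ is smooth. A smooth countable Borel equivalence relation is hyperfinite: from a Borel reduction $\theta$ witnessing smoothness together with the Lusin--Novikov uniformization theorem one obtains Borel functions $g_0,g_1,\dots$ enumerating each class (with repetitions), so that $[x]_E=\{g_n(\theta(x)):n\}$; letting $i(x)=\min\{n:g_n(\theta(x))=x\}$ and declaring $x\mathrel{E_N}y$ iff $x=y$, or $x\mathrel E y$ with $i(x),i(y)\le N$, gives an increasing sequence of finite Borel equivalence relations with union $E$.

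On $X_{\mathrm{inf}}$ — the crux — every forward orbit is infinite and injective, and the forward orbits of $E$-related points eventually coincide, so the graph of $f$ turns each $E$-class into a tree in which any two forward $f$-orbits eventually agree (in particular these classes are infinite, so $E\restriction X_{\mathrm{inf}}$ is aperiodic). By the marker lemma, fix a decreasing sequence of Borel sets $B_0\supseteq B_1\supseteq\cdots$ with empty intersection, each meeting every $E$-class. The plan is to use the $B_n$, together with $f$ and the order $<$, to cut each class — Borel-uniformly — into an increasing, exhausting sequence of finite subtrees, and then to set $x\mathrel{E_n}y$ iff $x$ and $y$ fall into the same piece at stage $n$.

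The main obstacle is exactly this cutting. Because $f^{-1}(x)$ may be countably infinite, the class-trees branch infinitely, so one cannot take graph balls (they are infinite); instead the order $<$ must be used — via a Borel enumeration in order type $\omega$ of the set of $f$-preimages of each point, again supplied by Lusin--Novikov — to organize the siblings at each node, and the blocks must then be defined so as to remain finite at every finite stage while still exhausting each (countable) class in the limit and so that each $E_n$ is genuinely an equivalence relation. This bookkeeping is where essentially all the content of the theorem lies; in the special case where $f$ is a Borel automorphism it specializes to the classical proof that $\Z$-actions induce hyperfinite equivalence relations.
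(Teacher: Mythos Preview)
The paper does not include a proof of this theorem; it states the result with a citation to \cite{DJK} and proceeds directly to examples. There is therefore no in-paper argument to compare your proposal against.

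Your outline follows the standard decomposition: split off the eventually-periodic part (smooth, hence hyperfinite) and work on the aperiodic part, where each $E$-class carries the structure of a tree with a single forward end under $f$. That much is correct and is how the argument in \cite{DJK} begins. But you stop exactly where the real work starts: you write that ``this bookkeeping is where essentially all the content of the theorem lies'' and then do not carry it out. As written this is a plan, not a proof --- you have reduced to the difficult core and then asserted that the core can be handled.

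One point in your sketch also needs care. The marker lemma supplies sets $B_n$ meeting every $E$-class, but an $E$-class here is an infinitely branching tree, and nothing forces a marker to lie on any particular forward $f$-orbit or to separate any particular pair of points. The definition that works for $\Z$-actions --- declare $x \mathrel{E_n} y$ when no marker lies on the orbit-interval between them --- can leave an entire infinite subtree marker-free and hence produce infinite $E_n$-classes. Getting finite pieces at every stage genuinely requires the Lusin--Novikov sibling enumeration to control the backward branching, not merely to ``organize'' it, and that construction is the substance of the theorem. A complete proof must supply it rather than name it.
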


\begin{example} \cite{DJK}
Define an equivalence relation $E$ all infinite binary sequences by
$x E y$ if for some $m$ and $n$, $x(m+i) = y(n+i)$ for all $i > 0$.
This equivalence relation is called \emph{tail equivalence} and is generated by the shift map
$f:2^\omega \to 2^\omega$
given by $f(x)(i) = x(i+1)$. 
\end{example}

\begin{example} \cite{ConnesFeldmanWeiss} \label{PSL2(Z)}
Recall that the real projective line $P^1(\R)$ is the collection of all
lines in $\R^2$ passing through the origin.
Identify an element of $P^1(\R)$ with the $x$-coordinate of its intersection 
with the line $y=1$, adopting the convention that the line $y=0$ becomes identified with $\infty$.
This identification gives $P^1(\R)$ a natural compact metric topology --- it is in fact homeomorphic to
a circle.
An element of $\PSL_2(\R)$ can then be regarded as a fractional linear transformation
$t \mapsto \frac{at + b}{ct + d}$.
Define a map $\Phi : 2^{\omega} \to P^1(\R)$ by
\[
\Phi(\seq{1} x) = \phi (x) \qquad \qquad
\Phi(\seq{0} x) = - \phi (\sim x)
\]
where
\[
\phi(\seq{1} x) = 1+ \phi(x) \qquad \qquad \phi (\seq{0} x) = \frac{1}{1 + \frac{1}{\phi (x)}} 
\]
(here $\sim x$ denotes the bitwise complement of $x$).
This map preserves the cyclic order and is a quotient map from $2^\omega$ onto $P^1(\R)$.
The action of $\PSL_2(\Z)$ on $P^1(\R)$ naturally lifts to an action on $2^\omega$.
The corresponding orbit equivalence relation on $2^\omega$ is tail equivalence.
In particular, this orbit equivalence relation of $\PSL_2(\Z)$'s action on
$P^1(\R)$ is hyperfinite.
\end{example}

The following gives an important characterization of the hyperfinite equivalence relations.

\begin{thm} \cite{SlamanSteel} \cite{meas_dyn:Weiss} 
Every Borel action of $\Z$ on a standard Borel space generates a hyperfinite orbit equivalence relation.
Conversely, every hyperfinite Borel equivalence relation is the orbit equivalence relation of a Borel action of
$\Z$.
\end{thm}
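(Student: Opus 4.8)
The plan is to prove the two implications separately; the forward direction is essentially immediate from the theorem of \cite{DJK} quoted just above. A Borel action of $\Z$ on a standard Borel space $X$ is given by a single Borel automorphism $T \colon X \to X$, namely the action of $1 \in \Z$, and its orbit equivalence relation $E_T$ is precisely the smallest equivalence relation containing $(x, T(x))$ for every $x$ (symmetry and transitivity force every $(x, T^n(x))$ into it). Since $T^{-1}(x)$ is a single point for each $x$, the theorem of \cite{DJK} applies with $f = T$ and gives that $E_T$ is hyperfinite.

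For the converse, let $E = \bigcup_n E_n$ with each $E_n$ a Borel equivalence relation with finite classes and $E_n \subseteq E_{n+1}$. Fix a Borel linear order $<$ on $X$ (obtained, say, by transporting the lexicographic order along a Borel injection of $X$ into $2^\omega$). The heart of the argument is to manufacture, in a Borel way, a linear order $\preceq_C$ on each $E$-class $C$ whose order type is one of $\Z$, $\omega$, $\omega^*$, or finite. I would do this by constructing finite orders $\preceq^n$ on the $E_n$-classes by recursion on $n$: on an $E_0$-class let $\preceq^0$ be $<$; given $\preceq^n$ and an $E_{n+1}$-class $F$ partitioned into $E_n$-classes $F_1, \dots, F_k$ listed so that their $<$-least elements increase, let $\preceq^{n+1}$ be the concatenation $F_1 \prec F_2 \prec \cdots \prec F_k$ with each $F_i$ ordered internally by $\preceq^n$. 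Because each $E_n$-class is an interval of every $\preceq^m$ with $m \geq n$, the union $\preceq_C = \bigcup_n \preceq^n$ is a linear order of $C$ in which the $E_n$-classes form an increasing sequence of convex finite sets exhausting $C$, and an elementary analysis of such orders shows that the order type of $\preceq_C$ is exactly $\omega^* + k + \omega \cong \Z$, or $\omega$, or $\omega^*$, or finite, according to whether infinitely many stages add new elements below $[x]_{E_n}$, above it, both, or neither. Each step here is uniformly Borel because $E_n$ has finite classes, so $x \mapsto [x]_{E_n}$ is Borel-enumerable and the recursive recipe for $\preceq^{n+1}$ is a Borel function of finite data.

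Finally I would convert these class orders into a single Borel automorphism $T$ with $E_T = E$. The space splits into the Borel, $E$-invariant pieces on which $\preceq_C$ has type $\Z$, type $\omega$, type $\omega^*$, or size exactly $j$ for each $j$, and on each piece the type and the $\preceq_C$-rank of a point are Borel. On the $\Z$-part take $T(x)$ to be the $\preceq_C$-successor of $x$; on a finite class $c_0 \prec \cdots \prec c_{j-1}$ take $T(c_i) = c_{(i+1) \bmod j}$; and on an $\omega$-class $c_0 \prec c_1 \prec \cdots$ use the ``zigzag'' map $T(c_0) = c_1$, $T(c_{2i+1}) = c_{2i+3}$, and $T(c_{2i}) = c_{2i-2}$ for $i \geq 1$, which is a Borel bijection of the class with a single orbit, the $\omega^*$-case being dual. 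Gluing these over the partition produces the desired Borel $\Z$-action. The main obstacle, and the only place demanding genuine care, is the claim that the inductively built order $\preceq_C$ has type among $\{\Z, \omega, \omega^*, \text{finite}\}$ — that convexity of the levels rules out pathologies such as $\Z + \Z$ — together with checking that every step above is executed by a Borel function; once that is in hand, the verification $E_T = E$ is routine.
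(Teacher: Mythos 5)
Your argument is correct; note, however, that the paper does not prove this theorem at all --- it is stated as a citation to Slaman--Steel and Weiss, so there is no in-text proof to compare against. What you have written is essentially the standard argument from the Dougherty--Jackson--Kechris paper. The forward direction is exactly as you say: a Borel $\Z$-action is a single Borel automorphism $T$, and the quoted theorem of \cite{DJK} applied to $f=T$ (which is injective, so certainly countable-to-one) gives hyperfiniteness of $E_T$; this is presumably why the author placed that theorem immediately before this one. For the converse, your recursive concatenation of the $E_n$-classes, ordered by $<$-least representatives, is the usual device: the key point, which you correctly isolate, is that each $E_n$-class is $\preceq^m$-convex for $m\geq n$, so every closed $\preceq_C$-interval is finite and the class order type must be finite, $\omega$, $\omega^*$, or $\Z$; and the Borelness of each step follows from Lusin--Novikov uniformization since the $E_n$ have finite classes. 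Your zigzag trick on the $\omega$- and $\omega^*$-parts and the cyclic permutation on finite classes do yield a single Borel automorphism whose orbits are exactly the $E$-classes. One cosmetic slip: your list pairing the order types with the conditions ``new elements added below, above, both, neither'' is misaligned --- type $\Z$ corresponds to ``both,'' $\omega$ to ``above only,'' and $\omega^*$ to ``below only'' --- but this does not affect the argument.
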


While it is not obvious, it turns out that every hyperfinite Borel equivalence relation is in fact $\mu$-amenable
with respect to any Borel measure on the underlying space.
In fact, a natural weakening captures the notation of $\mu$-amenability exactly.
If $\mu$ is a Borel measure on $X$, then we say that $E$ is \emph{$\mu$-hyperfinite}
if there is a $\mu$-measure $0$ set $Y \subseteq X$ such that the restriction of $E$ to
$X \setminus Y$ is hyperfinite.

\begin{thm} (see \cite{KechrisMiller}) \label{amenEq_equivs}
Suppose that $X$ is a standard Borel space, $E$ is a countable Borel equivalence relation on $X$,
and $\mu$ is a Borel measure on $X$.
The following are equivalent:
\begin{enumerate}

\item $E$ is $\mu$-amenable;

\item $E$ is $\mu$-hyperfinite;

\item $E = E^G_X$ for some $\mu$-measurable action of an amenable group $G$ on $X$;

\item $E = E^{\Z}_X$ for some $\mu$-measurable action of $\Z$ on $X$.

\end{enumerate}
\end{thm}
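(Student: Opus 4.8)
The plan is to establish the cycle of implications $(4) \Rightarrow (3) \Rightarrow (1) \Rightarrow (2) \Rightarrow (4)$; of these, I expect $(1) \Rightarrow (2)$ --- the Connes--Feldman--Weiss theorem --- to be by far the hardest, and the other three to be comparatively routine. The implication $(4) \Rightarrow (3)$ holds simply because $\Z$ is amenable. For $(2) \Rightarrow (4)$ I would invoke the theorem of Slaman--Steel \cite{SlamanSteel} and Weiss \cite{meas_dyn:Weiss} quoted above: if $Y \subseteq X$ is a $\mu$-null Borel set such that the restriction of $E$ to $X \setminus Y$ is hyperfinite, that theorem supplies a Borel action of $\Z$ on $X \setminus Y$ generating this restriction, and letting $\Z$ act trivially on $Y$ extends it to a Borel (hence $\mu$-measurable) action of $\Z$ on $X$ whose orbit equivalence relation agrees with $E$ off the $\mu$-null set $Y$.

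For $(3) \Rightarrow (1)$ I would pull back a Reiter sequence for the group along the orbit maps. Suppose $E = E^G_X$ for a $\mu$-measurable action of a countable amenable group $G$, and fix a right Reiter sequence for $G$: functions $\phi_n \in \ell^1(G)$ with $\phi_n \geq 0$, $\|\phi_n\|_1 = 1$, and $\sum_{g} |\phi_n(g) - \phi_n(gh)| \to 0$ for every $h \in G$. For $(x,y) \in E$ set $f_n(x,y) = \sum \{\phi_n(g) : g \cdot x = y\}$; then $f_n$ is $\mu$-measurable on $E$, $f_n(x,\cdot)$ is a probability distribution on the countable set $[x]_E$, and if $y = h_0 \cdot x$ a change of variables gives $\sum_{z \in [x]_E} |f_n(x,z) - f_n(y,z)| \leq \sum_{g} |\phi_n(g) - \phi_n(g h_0^{-1})| \to 0$. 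Thus $(f_n)$ is an almost-invariant (Reiter) sequence for $E$ itself. Taking $\nu_x$ to be a weak-$*$ cluster point in $\ell^\infty(X)^*$ of the probability measures $f_n(x,\cdot)$ produces a finitely additive probability measure on $X$ supported on $[x]_E$, with $\nu_x = \nu_y$ whenever $(x,y) \in E$ by the almost-invariance. The one point requiring care is that these cluster points can be selected so that $x \mapsto \nu_x$ is $\mu$-measurable and genuinely constant on each $E$-class; this is exactly the measurable-selection (``measurable field of means'') machinery of \cite{KechrisMiller}, which I would invoke rather than reprove.

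I would attack $(1) \Rightarrow (2)$ in two stages. First, I would reformulate $\mu$-amenability as a measurable Reiter condition for $E$: the existence of Borel maps $f_n : E \to [0,1]$ such that, for $\mu$-almost every $x$, the vector $f_n(x,\cdot)$ is a probability distribution on $[x]_E$ and the $\ell^1$-distance between $f_n(x,\cdot)$ and $f_n(y,\cdot)$ tends to $0$ for every $(x,y) \in E$. (If $\mu$ is not $E$-invariant one weights by the Radon--Nikodym cocycle of $E$; one may first reduce the $\sigma$-finite case to a probability measure and pass, if needed, to an $E$-quasi-invariant measure, which is automatic via the Feldman--Moore representation \cite{FeldmanMoore}.) Obtaining such almost-invariant $\ell^1$-vectors from a measurable assignment of means is the equivalence-relation analogue of the classical implication ``invariant mean $\Rightarrow$ Reiter's condition,'' and I would carry it out by a convexity argument together with a Borel (Jankov--von Neumann) selection. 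Second, I would use the $f_n$ to build an increasing sequence of finite Borel subequivalence relations $E_1 \subseteq E_2 \subseteq \cdots$ whose union agrees with $E$ off a $\mu$-null set $Y$: the finite support $S_n(x)$ of $f_n(x,\cdot)$ nearly coincides with $S_n(y)$ for most $y$ with $(x,y) \in E$, and --- after discarding a set of small measure and passing to a subsequence --- a measurable maximal-set (marker, or quasi-tiling) argument cuts each $E$-class into finite blocks of controlled cardinality, producing the $E_n$, with a diagonalization arranging that they increase and exhaust $E$ modulo $Y$. Then the restriction of $E$ to $X \setminus Y$ is an increasing union of finite Borel equivalence relations, so $E$ is $\mu$-hyperfinite. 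This second, combinatorial stage is the genuine obstacle; for its details I would refer to \cite{KechrisMiller}, following \cite{ConnesFeldmanWeiss}. Assembling the four implications then yields the equivalence of $(1)$ through $(4)$.
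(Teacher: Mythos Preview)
The paper does not prove this theorem: immediately after the statement it simply remarks that ``this is an amalgamation of several results stated in modern language'' and refers the reader to \cite{KechrisMiller}. There is therefore nothing in the paper against which to compare your proposal. Your outline --- the cycle $(4)\Rightarrow(3)\Rightarrow(1)\Rightarrow(2)\Rightarrow(4)$, with the Connes--Feldman--Weiss step $(1)\Rightarrow(2)$ singled out as the deep part and the remaining implications handled via the amenability of $\Z$, a transferred Reiter sequence, and the Slaman--Steel/Weiss theorem already quoted in the paper --- is the standard route one finds in \cite{KechrisMiller} and \cite{ConnesFeldmanWeiss}, and your identification of where the genuine work lies (measurable selection of means, passage to a quasi-invariant probability measure, the marker/tiling combinatorics) is accurate.
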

\noindent
This is an amalgamation of several results stated in modern language.

It is worth noting that the extent to which the measure 0 sets can be omitted in the previous theorem is
a major open problem in descriptive set theory.

\begin{problem} \cite{DJK}
If $E_n$ $(n < \infty)$ is an increasing sequence of hyperfinite Borel equivalence relations on a standard Borel space,
is $\bigcup_{n=0}^\infty E_n$ hyperfinite?
\end{problem}

\begin{problem} \cite{meas_dyn:Weiss} \label{weiss_prob}
Is every orbit equivalence relation of a Borel action of a countable amenable group acting
on a standard
Borel space hyperfinite?
\end{problem}

In fact it was only recently that a positive solution to Problem \ref{weiss_prob}
was proved for the class of abelian groups \cite{abelian_hyp};
the strongest result at the time of this writing is \cite{loc_nil_hyp}.
While not directly related, Marks has recently demonstrated differences between the so-called
\emph{Borel context} and \emph{measure-theoretic context} \cite{det_Borel_comb}.

\section{Examples}

\label{examples_sec}

In this section we will consider a number of examples.
Perhaps the easiest way to generate nonamenable equivalence relations is through
actions of groups which preserve a probability measure.

\begin{thm} \cite{JKL}
Suppose that $G$ is a countable group, $(X,\mu)$ is a standard Borel space equipped with
a Borel probability measure, and $E$ is the orbit equivalence relation of a measure preserving 
action of $G$ which is free $\mu$-a.e..
The equivalence relation $E$ is $\mu$-amenable if and only if $G$ is amenable.
\end{thm}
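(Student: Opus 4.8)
The plan is to establish both directions using the machinery already assembled in the excerpt, together with the standard translation between group-theoretic amenability and the existence of invariant means. For the easy direction, suppose $G$ is amenable. Then by Theorem \ref{amenEq_equivs} (specifically the implication $(3)\Rightarrow(1)$), any orbit equivalence relation of a $\mu$-measurable action of an amenable group is $\mu$-amenable; no use of freeness or measure-preservation is needed here. So the content is entirely in the forward direction: assuming $E$ is $\mu$-amenable, we must produce a left-invariant mean on $\ell^\infty(G)$.

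For the hard direction, I would start from a measurable assignment $x \mapsto \nu_x$ of finitely additive probability measures with $\nu_x([x]_E) = 1$ and $\nu_x = \nu_y$ whenever $(x,y) \in E$, as supplied by $\mu$-amenability. The idea is to transport each $\nu_x$ to a finitely additive probability measure $\lambda_x$ on $G$ by pulling back along the (a.e.\ defined, by freeness) bijection $g \mapsto g \cdot x$ between $G$ and $[x]_E$: for $S \subseteq G$ set $\lambda_x(S) = \nu_x(S \cdot x)$. The $E$-invariance $\nu_x = \nu_{h \cdot x}$ combined with the cocycle-type identity $S \cdot (h\cdot x) = (Sh)\cdot x$ translates into the equivariance relation $\lambda_{h \cdot x}(S) = \lambda_x(Sh)$, i.e.\ the assignment $x \mapsto \lambda_x$ is equivariant for the $G$-action on $X$ and the right-translation action on means over $G$. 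Now integrate: define $m(f) = \int_X \lambda_x(f)\, d\mu(x)$ for $f \in \ell^\infty(G)$, where $\lambda_x(f)$ denotes integration of $f$ against the finitely additive measure $\lambda_x$ (this is well defined by the measurability hypothesis, and finite because $\mu$ is a probability measure and $\|\lambda_x(f)\|_\infty \le \|f\|_\infty$). Using the measure-preservation of the action, the change of variables $x \mapsto h^{-1}\cdot x$ shows $m$ is left-invariant: $m(h \cdot f) = \int \lambda_x(h\cdot f)\,d\mu = \int \lambda_{h^{-1}\cdot x}(f)\,d\mu = \int \lambda_x(f)\,d\mu = m(f)$, where in the middle step one uses the equivariance of $x \mapsto \lambda_x$ to rewrite $\lambda_x(h\cdot f) = \lambda_{h^{-1}\cdot x}(f)$. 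Hence $m$ is a left-invariant mean on $G$ and $G$ is amenable.

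The main obstacle is the honest handling of the null sets and the measurability bookkeeping. Freeness only holds $\mu$-a.e., so the bijection $[x]_E \leftrightarrow G$ is defined only off a conull set; I would fix a conull $G$-invariant Borel set on which the action is free and work there throughout, noting that $\mu$-amenability restricted to this set still gives the required family $\nu_x$. One must also check that $x \mapsto \lambda_x(f)$ is $\mu$-measurable for each $f \in \ell^\infty(G)$: this follows by first verifying it for indicator functions $f = 1_S$ (where $\lambda_x(1_S) = \nu_x(\{y : y = g\cdot x \text{ for some } g \in S\})$, and the set $\{(x,y) : \exists g \in S,\ y = g\cdot x\}$ is Borel in $X \times X$ since $G$ is countable, so the measurability clause in the definition of $\mu$-amenability applies), then extending to simple functions by linearity and to general $f$ by uniform approximation. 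Finally, one should confirm that $m$ is positive and unital — immediate from positivity and normalization of each $\lambda_x$ — so that it is genuinely a mean. None of these steps is deep, but assembling them cleanly is where the care is required.
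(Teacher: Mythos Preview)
The paper does not actually prove this theorem; it simply quotes it from \cite{JKL} and moves on to examples. So there is no ``paper's own proof'' to compare against. Your argument is in fact the standard one, and it is essentially correct: push each fiberwise mean $\nu_x$ back to a mean $\lambda_x$ on $G$ via the bijection $g\mapsto g\cdot x$ (available a.e.\ by freeness), then average over $X$ using the finite invariant measure $\mu$ to obtain a mean on $G$.

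One small slip worth flagging: from the equivariance identity you correctly derive, $\lambda_{h\cdot x}(S)=\lambda_x(Sh)$, what follows is $\lambda_x(R_h f)=\lambda_{h^{-1}\cdot x}(f)$ for the \emph{right} translation $(R_h f)(g)=f(gh)$, not for the left regular action $(h\cdot f)(g)=f(h^{-1}g)$. Concretely, with $f=1_S$ one has $\lambda_x(1_{hS})=\nu_x(hS\cdot x)$, while $\lambda_{h^{-1}\cdot x}(1_S)=\lambda_x(Sh^{-1})=\nu_x(Sh^{-1}\cdot x)$, and these differ in general. So the mean $m$ you build is right-invariant rather than left-invariant. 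This is harmless for the conclusion---a countable group admits a left-invariant mean iff it admits a right-invariant one (precompose with $g\mapsto g^{-1}$)---but you should either state that you are producing a right-invariant mean, or redefine $\lambda_x(S)=\nu_x(S^{-1}\cdot x)$ at the outset to get left-invariance directly. The measurability, null-set, and positivity checks you outline are all fine.
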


The following are two typical --- but quite different --- examples of such actions.

\begin{example}
If $G$ is any countable group and $(X,\mu)$ is any standard probability space,
then $G$ acts by shift on $X^G$ as follows: $(g \cdot x)(h) = x(g^{-1} h)$.
This action preserves the product measure and, unless $\mu$ is a point-mass,
is free almost everywhere with respect to the product measure.
\end{example}

\begin{example}
The action of $\SL_2(\Z)$ on the torus $\T^2$ equipped with Lebesgue measure
is measure preserving and free $\lambda$-a.e..
Since $\SL_2(\Z)$ contains the free group on two generators, this orbit equivalence
relation is not $\lambda$-amenable.
\end{example}

It is interesting to contrast this previous example with that of the group
$\PSL_2(\Z)$ acting on 
$P^1(\R)$ (Example \ref{PSL2(Z)}), which is homeomorphic to the circle.
It is well known that $\PSL_2(\Z)$ contains a free subgroup (even one of finite index) and hence
is nonamenable.
On the other hand, we have seen above that the orbit equivalence relation induced on $P^1(\R)$ is
just tail equivalence on $2^\omega$ in disguise;
in particular it is hyperfinite and hence amenable.
Notice that, unlike the action of $\SL_2(\Z)$ on the torus,
there is no standard probability measure on $P^1(\R)$
which is preserved by the action of $\PSL_2(\Z)$.

It turns out, however, that \emph{dense} subgroups of $\PSL_2(\R)$ do produce
a nonamenable orbit equivalence relation when they act on $P^1(\R)$.

\begin{thm} \cite{GhysCarriere} \label{dense_to_free}
Every nondiscrete subgroup of $\PSL_2(\R)$ is either solvable or else contains a nondiscrete
free subgroup on two generators.
\end{thm}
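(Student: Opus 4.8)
The plan is to analyze a nondiscrete subgroup $G \leq \PSL_2(\R)$ through its closure $\bar G$, which is a closed, hence Lie, subgroup of $\PSL_2(\R)$. First I would invoke the classification of closed subgroups of $\PSL_2(\R)$ up to conjugacy: such a subgroup is either all of $\PSL_2(\R)$, or has dimension $\leq 2$ and is therefore contained in (a conjugate of) the upper-triangular Borel subgroup $B$, or is discrete. Since $G$ is nondiscrete, $\bar G$ is nondiscrete, so either $\bar G = \PSL_2(\R)$ or $\bar G$ is conjugate into $B$. In the latter case $G$ itself is a subgroup of a solvable group, hence solvable, and we are done. So the real content is the case $\bar G = \PSL_2(\R)$, i.e. $G$ is a dense subgroup of $\PSL_2(\R)$, and the goal becomes: every dense subgroup of $\PSL_2(\R)$ contains a free subgroup on two generators.

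For the dense case I would use a Tits-alternative / ping-pong argument, but arranged so that the needed elements can be found inside the dense group $G$ rather than in the ambient Lie group. Fix a hyperbolic element $h_0 \in \PSL_2(\R)$ with distinct attracting and repelling fixed points $p^+, p^-$ on $P^1(\R)$; powers of $h_0$ give a transformation that contracts all of $P^1(\R) \setminus \{p^-\}$ toward $p^+$ as uniformly as we like on compact subsets. By density of $G$ we may choose $h \in G$ close enough to $h_0$ that $h$ is still hyperbolic with fixed points $q^+, q^-$ near $p^+, p^-$. Next, again using density, pick $g \in G$ close to the identity but not fixing $\{q^+, q^-\}$ setwise; then $g h g^{-1}$ is hyperbolic with fixed points $g \cdot q^\pm$ disjoint from $\{q^+, q^-\}$. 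Now set $a = h^n$ and $b = g h^n g^{-1}$ for $n$ large. Choosing disjoint closed arcs $U^\pm$ around $q^\pm$ and $V^\pm$ around $g \cdot q^\pm$, high powers guarantee $a^{\pm k}(P^1(\R) \setminus U^\mp) \subseteq U^\pm$ and similarly for $b$ with $V^\pm$; the classical ping-pong lemma then yields that $\langle a, b \rangle$ is free of rank $2$, and $a, b \in G$.

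The main obstacle is the case analysis for closed subgroups and, within the dense case, arranging the ping-pong configuration entirely inside $G$: one must verify that the perturbed element $h$ remains hyperbolic (an open condition on the trace, so density suffices) and that a conjugator $g \in G$ can be found moving the fixed-point pair off itself — this uses that the $G$-orbit of $\{q^+, q^-\}$ cannot be a single point, which follows from density since the stabilizer of a two-point set in $\PSL_2(\R)$ is a proper closed subgroup. Once those two genericity facts are secured, the remaining estimates (choosing $n$ large enough that high powers of $a$ and $b$ map the complements of the appropriate arcs strictly inside the arcs) are routine North–South dynamics for hyperbolic Möbius transformations, and the ping-pong lemma finishes the argument. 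A cleaner packaging, which I would likely adopt, is to cite the Tits alternative for linear groups directly: a subgroup of $\PSL_2(\R)$ is either virtually solvable or contains a nonabelian free subgroup, and a nondiscrete virtually solvable subgroup of $\PSL_2(\R)$ is in fact solvable (its Zariski-connected component of the closure is contained in a Borel), which collapses the whole statement to the structure of closed subgroups plus Tits.
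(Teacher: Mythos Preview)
The paper does not actually supply a proof of this theorem --- it is simply cited from Ghys--Carri\`ere --- so there is no in-paper argument to compare against. Evaluated on its own, however, your proposal has a genuine gap: you establish that a dense $G \leq \PSL_2(\R)$ contains a free subgroup of rank $2$, but not that it contains a \emph{nondiscrete} one, and nondiscreteness is precisely the feature needed downstream in Theorem~\ref{free_to_nonamen}.

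Concretely, your construction takes a hyperbolic $h \in G$, a conjugator $g \in G$, and sets $a = h^n$, $b = g h^n g^{-1}$ with ping-pong played on disjoint arcs $U^\pm, V^\pm$ around the four fixed points. The group $\langle a, b \rangle$ you obtain this way is a classical Schottky group, and Schottky groups are \emph{discrete}: any nontrivial reduced word maps a fixed reference point outside all four arcs into one of the arcs, so no nontrivial element can be close to the identity. The same objection applies to your ``cleaner packaging'' via the Tits alternative, which only promises a nonabelian free subgroup with no control on discreteness.

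What is missing is an additional idea forcing the free group to accumulate at the identity. The paper's example immediately following the theorem hints at one mechanism: if one of the free generators is \emph{elliptic of infinite order} (equivalently, has trace in $(-2,2)$ and irrational rotation angle), then the cyclic group it generates is already nondiscrete, and hence so is any free group containing it. Arranging a ping-pong pair with an elliptic generator is more delicate than the hyperbolic case, but this (or an analogous density argument producing generators arbitrarily close to the identity) is the step your outline needs and does not currently contain.
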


\begin{thm} \cite{GhysCarriere} \label{free_to_nonamen}
If $\Gamma$ is a rank 2 free subgroup of a finite dimensional Lie group
$G$ and $\Gamma$ is nondiscrete, then the orbit equivalence relation of $\Gamma$'s action on
$G$ is nonamenable with respect to the Haar measure on $G$.
\end{thm}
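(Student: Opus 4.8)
The plan is to assume the orbit equivalence relation $E:=E^\Gamma_G$ of $\Gamma$ acting on $G$ by (left) translation is $\mu$-amenable and to derive a contradiction with the nonamenability of $\Gamma\cong F_2$, using nondiscreteness in an essential way. First I would reduce to the case that $\Gamma$ is \emph{dense} in $G$: the closure $\overline\Gamma$ is a Lie subgroup by Cartan's theorem, $G$ fibers over the manifold $\overline\Gamma\backslash G$ with the $\overline\Gamma$-cosets as fibers, the left $\Gamma$-action preserves this fibration (so $E$ lies inside the fibers of $G\to\overline\Gamma\backslash G$), and Haar measure disintegrates accordingly; since $\mu$-amenability is inherited by $\mu$-a.e.\ member of such a measurable family of equivalence relations, it suffices to treat a single fiber, on which $\Gamma$ acts densely by translation with Haar measure. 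Note that $G$ is then nonabelian, since it contains $F_2$, and in fact nonamenable, since no amenable group contains $F_2$; in particular there is no Følner sequence in $G$ to average against, which is precisely why the argument must be somewhat delicate.

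Next I would convert $\mu$-amenability into a measurable equivariant family of means. Since $\Gamma$ is torsion-free it acts freely, so pulling each $\nu_x$ back along the orbit bijection $\gamma\mapsto\gamma x$ yields a finitely additive probability measure $m_x$ on the group $\Gamma$, depending measurably on $x$, and the condition $\nu_x=\nu_{\gamma x}$ becomes the equivariance $m_{\gamma x}=\gamma\cdot m_x$, where $\Gamma$ acts on the compact convex space $M(\Gamma)$ of finitely additive probability measures on $\Gamma$ by (a form of) the regular representation. A $\Gamma$-fixed point of $M(\Gamma)$ is an invariant mean on $F_2$, which does not exist; the task is to rule out a measurable \emph{equivariant} map $\Phi\colon G\to M(\Gamma)$, $\Phi(x)=m_x$. (By Theorem~\ref{amenEq_equivs} one may instead extract from $\mu$-amenability a $\mu$-hyperfinite decomposition $E=\bigcup_n F_n$ off a null set; since each orbit, with the graphing $x\mapsto ax,\ x\mapsto bx$, is a $4$-regular tree, one would then try to contradict the fact that every finite subset of a $4$-regular tree has edge boundary at least half its size. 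Either formulation leads to the same obstacle.)

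The device that brings in the nondiscreteness is the right translation action of $G$ on itself: it commutes with the left $\Gamma$-action and preserves Haar measure, so for a precompact neighborhood $V$ of the identity the averaged assignment $\overline m_x(S):=\tfrac{1}{\mu(V)}\int_V m_{xh}(S)\,d\mu(h)$ is again a finitely additive probability measure, is still $\Gamma$-equivariant (the averaging is algebraically compatible with the $\Gamma$-action, independently of any invariance of the averaging measure), and depends \emph{continuously} on $x$ once one passes to a fixed countable, $\Gamma$-invariant, point-separating Boolean algebra $\mathcal A$ of subsets of $\Gamma$ and regards $\overline m_\bullet$ as a map into the compact metrizable convex space $K$ of finitely additive probability measures on $\mathcal A$, on which the discrete group $\Gamma$ acts continuously. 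Now density of $\Gamma$ in $G$ can be used: for each $x$ the map $g\mapsto\overline m_{gx}$ is a continuous extension to $G$ of the $\Gamma$-orbit map of $\overline m_x$, so every $\Gamma$-orbit in $L:=\overline{\overline m_\bullet(G)}$ is dense (the action on $L$ is minimal), and one can try to extend the $\Gamma$-action on $L$ to a continuous action of $\overline\Gamma=G$ and $\overline m_\bullet$ to a continuous $G$-equivariant map.

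I expect the genuinely hard step to be this last one: passing from ``continuous equivariant family of means plus $\Gamma$ dense'' to an actual contradiction. It cannot be purely soft --- a connected Lie group can act minimally on a compact metric space with no invariant probability measure (e.g.\ $\PSL_2(\R)$ on the circle), so minimality of the $\Gamma$-action on $L$ is not by itself enough. One must genuinely use that $G$ is finite dimensional and that $\Gamma$ is free of rank $2$: a small neighborhood of the identity already generates a subgroup $\Gamma'$ of $\Gamma$ which is free (as a subgroup of $F_2$) and nonabelian (its closure contains the nonabelian identity component $G^0$, since $\Gamma\cap G^0=\ker(F_2\twoheadrightarrow G/G^0)$ is itself free of rank $\ge 2$), and such ``small'' generators act on $L$ nearly like the identity; one then plays the local, polynomial structure of $G$ against the exponential combinatorics of $F_2$ --- equivalently, the metric pulled back from $G$ to $\Gamma$ is dominated by the word metric, so the orbits are too ``squished'' to sustain an equivariant field of means on a nonamenable free group, or (in the hyperfinite formulation) a $4$-regular tree cannot be exhausted by finite blocks with vanishing boundary ratio. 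Making this comparison quantitative, in the presence of a noncompact, nonamenable $G$ and orbits of measure zero, is the technical core of the Carrière--Ghys theorem.
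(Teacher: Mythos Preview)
Your proposal is not a proof but a strategy with an explicitly unfinished endgame: you reduce to the dense case, average over right translations to upgrade the measurable field of means to a continuous one, and then say that extracting a contradiction from this ``is the technical core'' which ``cannot be purely soft'' --- and stop. The reduction and the averaging are reasonable preliminary moves, but neither one brings you visibly closer to a contradiction, and your final paragraph is an honest acknowledgment of difficulty (minimality of the $\Gamma$-action on $L$ is not enough, one must compare Lie-group geometry to free-group combinatorics, etc.) rather than an argument. As written there is a genuine gap.

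The paper's route, via Theorem~\ref{abstract_GC} specialized to $X=G$ with Haar measure and left translation, is far more direct and avoids all of this machinery. One works with the raw measurable assignment $x\mapsto\nu_x$ and a single scalar function: using freeness to write each $y$ in the orbit of $x$ as $y$ with $x=\gamma(x,y)\cdot y$, set
\[
\phi(x)=\nu_x\bigl(\{y:\text{the reduced word for }\gamma(x,y)\text{ begins with }a\text{ or }a^{-1}\}\bigr).
\]
Elementary free-group combinatorics give two facts. First, the sets $\Gamma_a,\Gamma_b\subseteq\Gamma$ of reduced words beginning with $u$ and ending with $u^{-1}$ both accumulate to the identity (this is the only place nondiscreteness is used, and it needs just one conjugation). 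Second, if $g\in\Gamma_b$ then $A_x\cap A_{g\cdot x}=\emptyset$, so $\phi(x)>1/2$ forces $\phi(g\cdot x)<1/2$, and symmetrically with $a,b$ interchanged; a three-term disjointness shows $\{\phi\ne 1/2\}$ has positive measure. Now Proposition~\ref{overlap} --- a soft consequence of local finiteness saying that any positive-measure set meets its translate by all sufficiently small group elements --- produces $g\in\Gamma_b$ close enough to the identity that $g\cdot\{\phi>1/2\}$ meets $\{\phi>1/2\}$, a contradiction. No density reduction, no right-averaging for continuity, no extension of the action to $G$, and no quantitative metric comparison is needed.
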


In the case of $\Gamma = \PSL_2(\Z[1/2])$, it is not difficult to exhibit a
nondiscrete free subgroup explicitly.

\begin{example}
The matrices
\[
\alpha = \begin{pmatrix}
1/2 & -4 \\
1/4 &  0 \\
\end{pmatrix}
\qquad \qquad
\beta = \begin{pmatrix}
1/2 & -1/4 \\
4 & 0 \\
\end{pmatrix}
\]
generate a nondiscrete free subgroup of $\PSL_2(\Z[1/2])$.
In order to see this, first observe that the traces of these matrices are
$1/2$ and hence both matrices describe \emph{elliptic} transformations of the real projective line
(i.e. there are no fixed points).
Since any elliptic element of $\PSL_2(\R)$ of infinite order generates a nondiscrete subgroup,
it suffices to show that the above matrices generate a free group.

Define $X$ to be the set of all rational numbers in $P^1(\R) = \R \cup \{\infty\}$ which can be represented by a fraction with an odd denominator
and let $Y$ denote the remaining rational numbers in $P^1(\R)$.
By the \emph{Ping-Pong Lemma} (see, e.g., \cite{topics_geo_grp}), it suffices to show that if $n \ne 0$ is an integer, then
$\alpha^n Y \subseteq X$ and $\beta^n X \subseteq Y$.
Let $X_0$ consist of those elements of $X$ which can be represented by a fraction
of the form $(4p+2)/q$ where $q$ is odd.
Notice that $\alpha (X_0 \cup Y) \subseteq X_0$ and that $X_0$ and $Y$ are disjoint.
It follows that $\alpha^n Y \subseteq X$ whenever $n$ is a nonzero integer.
Similarly, $\beta^n X \subseteq Y$.
\end{example}

Theorem \ref{dense_to_free} was generalized considerably by the following result.

\begin{thm} \cite{dense_free_subgrp}
If $\Gamma$ is a dense subgroup of a connected semi-simple real Lie group,
then $\Gamma$ contains a dense free subgroup of rank 2.
\end{thm}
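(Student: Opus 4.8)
By the Tits alternative, the dense (hence Zariski dense) subgroup $\Gamma$ already contains a free subgroup of rank $2$; the issue is to find one that is additionally dense in $G$. The plan is to split the argument into two parts: first the special case where $\Gamma$ is all of $G$, which is essentially a theorem of Kuranishi, and then the genuinely new content, descending the construction into an arbitrary dense subgroup.

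For the case $\Gamma = G$ one argues by Baire category in the manifold $G \times G$. Since $G$ is a connected Lie group, the set $\mathcal{D} \subseteq G \times G$ of pairs $(a,b)$ with $\langle a, b \rangle$ dense in $G$ is comeager: fixing a countable base $\{U_n\}$ for $G$, each $\mathcal{D}_n = \{(a,b) : \langle a, b\rangle \cap U_n \neq \emptyset\}$ is open, and it is dense by a perturbation argument exploiting connectedness (if $\overline{\langle a,b\rangle}$ is a proper closed subgroup, a small perturbation of $b$ enlarges it, and iterating drives the group into $U_n$); thus $\mathcal{D} = \bigcap_n \mathcal{D}_n$ is comeager. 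Independently, for each nontrivial reduced word $w$ in two letters the set $\mathcal{F}_w = \{(a,b) : w(a,b) \neq e\}$ is open, and it is dense because $(a,b) \mapsto w(a,b)$ is real-analytic and not identically $e$: any connected semisimple real Lie group contains a copy of $\PSL_2(\R)$ coming from an $\mathfrak{sl}_2$-triple, and $\PSL_2(\R)$ already contains free subgroups of rank $2$, so the zero set of $w$ is a proper, hence nowhere dense, analytic subset. Therefore $\mathcal{F} = \bigcap_w \mathcal{F}_w$ is comeager, and any point of the nonempty set $\mathcal{D} \cap \mathcal{F}$ generates a dense free subgroup of rank $2$ in $G$.

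The difficulty is that the two generators must be produced inside $\Gamma$, and the bare category argument does not apply since $\Gamma$ with its subspace topology need not be a Baire space. Here I would build $a, b \in \Gamma$ by an explicit construction interleaving the two families of conditions. The freeness conditions are the mild ones: $\Gamma$ is Zariski dense, so each Zariski-open set $\mathcal{F}_w$ meets $\Gamma \times \Gamma$ in a dense set; moreover a weak ping-pong configuration certifying freeness --- say one loxodromic generator together with a second generator whose attracting and repelling points are in sufficiently general position --- can be realized in $\Gamma$, because $\Gamma$, being dense, contains elements approximating any prescribed loxodromic element of $G$, and being loxodromic with fixed points in approximately prescribed positions is an open condition. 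The density conditions $\mathcal{D}_n$ are the hard ones and call for a ``densification'' step: from finitely much data for $a$ and $b$ already realized in $\Gamma$ and carrying a ping-pong configuration, one must find an arbitrarily small perturbation within $\Gamma$ after which some word in the generators enters $U_n$ while the (open) ping-pong inequalities survive. I expect essentially all of the work to lie in this last point. Since density of $\langle a,b\rangle$ is a $G_\delta$ condition it cannot be achieved in finitely many steps, so one must run infinitely many perturbations, controlling them so as simultaneously to preserve freeness --- stable, as it is an open constraint --- and to keep the construction honestly inside $\Gamma \times \Gamma$ rather than letting it drift to a limit point in $G \times G$; arranging the perturbations so that commitments made at finite stages are never revisited, while still reaching every $U_n$, is the crux of the proof.
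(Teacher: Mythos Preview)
The paper does not prove this theorem; it is simply quoted as a result of Breuillard and Gelander \cite{dense_free_subgrp} with no argument given, so there is no in-paper proof for your proposal to be compared against.

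Since you asked for feedback anyway: your outline captures the two ingredients (genericity of freeness, genericity of dense generation) and correctly identifies the real obstacle --- $\Gamma\times\Gamma$ is not a Baire space, so one cannot just intersect comeager sets. However, the resolution you sketch, an infinite sequence of perturbations that must somehow stay inside $\Gamma\times\Gamma$ without converging to a limit in $G\times G$, is exactly the trap the Breuillard--Gelander proof avoids. Their key lemma is that for a connected Lie group $G$ the set of $n$-tuples topologically generating $G$ is \emph{open} in $G^n$ (not merely $G_\delta$); this is a nontrivial fact about Lie groups, relying on the no-small-subgroups property and on the stability of the Lie algebra generated by $\log$ of elements near the identity. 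Granting openness, one first chooses finitely many elements of $\Gamma$ that already generate densely, and then makes a \emph{single} small perturbation inside $\Gamma$ to land in the (Zariski-open, hence $\Gamma$-dense) locus of free tuples, reducing to two generators by a standard trick. Your plan reverses the roles --- protecting freeness while chasing density step by step --- and this forces the infinite iteration you flag as ``the crux''; as stated it is not clear it can be made to terminate inside $\Gamma$, whereas the openness-of-dense-generation lemma dissolves the difficulty entirely.
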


The following theorem is a generalization of Theorem \ref{free_to_nonamen}, although
the argument closely follows that of \cite{GhysCarriere}.

\begin{thm} \label{abstract_GC}
Suppose that $(X,\mu)$ is a locally finite measured Polish space.
If $\Gamma = \langle a,b \rangle$ is a
free nondiscrete metrizable group which is acting freely and continuously on $(X,\mu)$,
then the orbit equivalence relation is not $\mu$-amenable.
\end{thm}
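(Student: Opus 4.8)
The plan is to argue by contradiction: if $E = E^\Gamma_X$ were $\mu$-amenable, one transfers this to a statement about means on the free group $\Gamma$ and then runs a paradoxical–decomposition argument, using nondiscreteness to place a pair of free generators arbitrarily close to the identity. So suppose $E$ is $\mu$-amenable, witnessed by a $\mu$-measurable assignment $x \mapsto \nu_x$ with $\nu_x([x]_E) = 1$ and $\nu_x = \nu_y$ for $(x,y)\in E$. Since $\Gamma$ acts freely, $g \mapsto g^{-1}\cdot x$ identifies $\Gamma$ with $[x]_E$, so $\lambda_x(S) := \nu_x(\{g^{-1}\cdot x : g\in S\})$ is a finitely additive probability measure on the abstract group $\Gamma$; a short computation from $E$-invariance gives the equivariance $\lambda_{g\cdot x} = g\cdot\lambda_x$ for the left translation action of $\Gamma$ on the space $\mathcal{P}(\Gamma)$ of such measures, and $x\mapsto\lambda_x(S)$ is $\mu$-measurable for every $S\subseteq\Gamma$ because the relevant subset of $X\times X$ is a countable union of Borel graphs. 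In the setting of \cite{GhysCarriere}, where $X$ is a Lie group with Haar measure and $\Gamma$ acts by translation, one could now average the $\lambda_x$ against the invariant measure to obtain a $\Gamma$-invariant mean on $\Gamma$, contradicting nonamenability of $\Gamma$; for a general, non-invariant $\mu$ we must proceed differently, and this is where nondiscreteness enters.

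First, nondiscreteness yields: for every neighborhood $V$ of the identity in $\Gamma$ there are $a',b'\in V$ freely generating a subgroup of rank two. Indeed $V\cap\Gamma\setminus\{e\}$ is infinite, and two elements of a free group fail to generate a rank-two free subgroup only if they lie in a common cyclic subgroup; so if the claim failed, all of $V\cap\Gamma$ would lie in a single maximal cyclic subgroup $\langle c\rangle$, which would therefore be open. Conjugating $V$ by some $g$ not normalizing $\langle c\rangle$ — such $g$ exists since a free group of rank two has no nontrivial normal cyclic subgroup — makes $g\langle c\rangle g^{-1}$ open as well, and it meets $\langle c\rangle$ only in $e$, as distinct maximal cyclic subgroups of a free group intersect trivially; hence $\{e\}$ is open, contradicting nondiscreteness. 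Since $\Gamma$ is countable there are only countably many such pairs $(a',b')$; for each one, fix a right transversal of $\Gamma' := \langle a',b'\rangle$ in $\Gamma$ and the associated $\Gamma'$-equivariant retraction $\pi\colon\Gamma\to\Gamma'$, and set $\mu_x := \pi_*\lambda_x \in \mathcal{P}(\Gamma')$; this is measurable in $x$ and satisfies $\mu_{g'\cdot x} = g'\cdot\mu_x$ for $g'\in\Gamma'$.

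Next, apply Lusin's theorem to this countable family of real-valued functions simultaneously — using tightness of finite Borel measures on Polish spaces, cf.\ Fact~\ref{weak_Radon} — to obtain a compact set $C$ with $0<\mu(C)<\infty$ on which every map $x\mapsto\mu_x(S)$, as $S$ ranges over the ``first-letter'' subsets of each $\Gamma'$, is continuous, hence uniformly continuous. Fix $\delta\in(0,1/2)$ with $\delta<\mu(C)/2$. Using the continuity of $(g,x)\mapsto g\cdot x$ (via the tube lemma over the compact set $C$), the continuity of $g\mapsto\mu(g\cdot E)$, and outer regularity — refining the estimate in the proof of Proposition~\ref{overlap} — choose a symmetric neighborhood $V$ of the identity so that for all $g\in V$ one has $\mu(g\cdot C \triangle C)<\delta$ and $\sup_{x\in C} d(g\cdot x,x)$ small enough that $|\mu_{g\cdot x}(S)-\mu_x(S)|<\delta$ whenever $x$ and $g\cdot x$ both lie in $C$. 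Finally pick $a',b'\in V$ that freely generate $\Gamma' = \langle a',b'\rangle$.

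Now run the count inside $\Gamma'$. Let $A^\pm$ and $B^\pm$ be the reduced words of $\Gamma'$ whose first letter is $(a')^{\pm1}$ respectively $(b')^{\pm1}$, so $\Gamma' = \{e\}\sqcup A^+\sqcup A^-\sqcup B^+\sqcup B^-$. Cancellation gives $(a')^{-1}A^+ = \Gamma'\setminus A^-$, so equivariance yields $\mu_{a'\cdot x}(A^+) = (a'\cdot\mu_x)(A^+) = \mu_x((a')^{-1}A^+) = 1-\mu_x(A^-)$; combined with the continuity estimate this gives $\mu_x(A^+)+\mu_x(A^-) > 1-\delta$ for all $x\in C$ with $a'\cdot x\in C$, and likewise $\mu_x(B^+)+\mu_x(B^-) > 1-\delta$ for all $x\in C$ with $b'\cdot x\in C$. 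Since $\mu\big(C\cap(a')^{-1}C\cap(b')^{-1}C\big) \geq \mu(C)-2\delta > 0$, some single $x$ satisfies both inequalities; but $A^+,A^-,B^+,B^-$ are pairwise disjoint in $\Gamma'$, so $1 = \mu_x(\Gamma') \geq \mu_x(A^+)+\mu_x(A^-)+\mu_x(B^+)+\mu_x(B^-) > 2-2\delta$, forcing $\delta>1/2$, a contradiction. I expect the main obstacle to be the bookkeeping in the third paragraph: reconciling the merely measurable dependence of $x\mapsto\lambda_x$ with the genuine continuity of the action on $(X,\mu)$, and arranging via the regularity facts that the oscillation of $x\mapsto\mu_x$, the symmetric difference $\mu(g\cdot C\triangle C)$, and the motion $d(g\cdot x,x)$ are all controlled simultaneously for a single choice of $V$, $C$ and $\delta<1/2$. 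Conceptually, the point that makes the proof work — and that fails for discrete $\Gamma$ — is the trade in the second paragraph, where nondiscreteness is spent to obtain a rank-two free subgroup both of whose generators move one compact set only slightly.
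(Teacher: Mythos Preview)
Your strategy is natural, but there is a genuine circularity in the third paragraph that I do not see how to close. You first apply Lusin simultaneously to the countable family of functions $x\mapsto \mu_x(S)$, one family for each candidate pair $(a',b')$, obtaining a single compact $C$ on which all of them are continuous. You then want a neighborhood $V$ of the identity so small that $g\in V$ and $x,g\cdot x\in C$ force $|\mu_{g\cdot x}(S)-\mu_x(S)|<\delta$; only \emph{afterwards} do you select $a',b'\in V$. But the oscillation bound you actually use in the last paragraph is for the four functions attached to \emph{that particular} pair $(a',b')$, and their modulus of uniform continuity on $C$ is determined only once $(a',b')$ is fixed. Lusin gives continuity of all countably many functions on $C$, not a common modulus: there is no reason the displacements $\sup_{x\in C}d(a'_k\cdot x,x)$ should tend to zero faster than the uncontrolled moduli $\eta_k$ as $(a'_k,b'_k)\to(e,e)$. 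So $V$ cannot be chosen before $(a',b')$; and if chosen after, nothing guarantees $a',b'\in V$. You correctly flagged this paragraph as the obstacle, but it is not bookkeeping---it is an order-of-quantifiers problem.

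The paper sidesteps this by never changing generators. It defines once and for all $\phi(x)=\nu_x(A_x)$, where $A_x$ is the set of $y$ in the orbit of $x$ for which the element carrying $y$ to $x$ has reduced form beginning with $a^{\pm1}$ in the \emph{original} generators $a,b$. The combinatorial input is that the sets $\Gamma_u=\{\,w:\text{$w$ reduced, begins with $u$, ends with $u^{-1}$}\,\}$ already accumulate at the identity, and any $g\in\Gamma_b$ makes $A_x$ and $A_{g\cdot x}$ disjoint; hence $\phi(x)>1/2$ forces $\phi(g\cdot x)<1/2$ for every $g\in\Gamma_b$, and symmetrically. One then shows $\{\phi\ne 1/2\}$ has positive measure and applies Proposition~\ref{overlap} to $\{\phi>1/2\}$ or $\{\phi<1/2\}$: whatever neighborhood $V$ that proposition produces, $\Gamma_b$ (resp.\ $\Gamma_a$) meets it, giving the contradiction. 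The crucial difference from your argument is that the measurable data $\phi$ is fixed \emph{before} any neighborhood is chosen, and nondiscreteness is spent only to place elements of the fixed sets $\Gamma_a,\Gamma_b$ inside $V$---the quantifiers are in the opposite order, which is exactly what makes the argument go through.
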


\begin{proof}
Suppose for contradiction that $E^\Gamma_X$ is $\mu$-amenable and fix a $\mu$-measurable
assignment $x \mapsto \nu_x$ such that:
\begin{itemize}

\item for each $x$, $\nu_x$ is a finitely additive probability measure supported on the orbit of
$x$;

\item if $x$ and $y$ are in the same orbit, then $\nu_x = \nu_y$.

\end{itemize}
For $u \in \{a,b\}$, define $\Gamma_u$ to be all those elements of $\Gamma$ which are representable
by a reduced word beginning with $u$ and ending with $u^{-1}$.
Observe that if $g$ is a nonidentity element of $\Gamma$, then there is a $h$ in
$\{a,ab,ab^{-1}\}$ such that $h g h^{-1}$ is in $\Gamma_a$.
Thus there is an $h \in \{e,a,ab,ab^{-1}\}$ and a $\Gamma' \subseteq \Gamma$ which accumulates to
the identity such that $h \Gamma' h^{-1} \subseteq \Gamma_a$.
Since conjugation is continuous, it follows that $\Gamma_a$ also accumulates to the identity.
Furthermore, $b \Gamma_a b^{-1} \subseteq \Gamma_b$ and thus $\Gamma_b$ accumulates to the identity as well.

Since the action of $\Gamma$ on $X$ is free,
for each $x,y \in X$ which lie in the same orbit, there is a unique
$\gamma = \gamma(x,y)$ in $\Gamma$ such that $x = \gamma \cdot y$.
Notice that $\gamma(g\cdot x,y) = g^{-1} \gamma (x,y)$.
Define $\phi:X \to [0,1]$ by letting $\phi(x) = \nu_x(A_x)$ where
$A_x$ is the set of those $y$ in the orbit of $x$ such that the reduced word representing $\gamma(x,y)$ begins
with $a$ or $a^{-1}$.
Observe that for any $x$ in $X$ and $g$ in $\Gamma_b$, $A_{x} \cap A_{g \cdot x} = \emptyset$.
Hence if $x$ is in $X$ and $\phi(x) > 1/2$, then $\phi(g \cdot x) < 1/2$ whenever $g$ is in $\Gamma_b$.
Similarly, if $\phi(x) < 1/2$ and $g$ is in $\Gamma_a$, then $\phi(g \cdot x) > 1/2$.
Furthermore, for all $x$ and $g \in \Gamma_a$, the sets $A_{x}$, $A_{b g b^{-1} \cdot x}$ and
$A_{b^2 g b^{-2} \cdot x}$ are pairwise disjoint and
consequently $0 \leq \phi(x) + \phi(b g b^{-1} \cdot x) + \phi(b^2 g b^{-2} \cdot x) \leq 1$.

I next claim that $Y = \{x \in X : \phi(x) \ne 1/2\}$ has positive measure with respect to $\mu$.
Suppose not.
Using our assumption that $\Gamma$ acts continuously on $(X,\mu)$,
find an open neighborhood $V$ of the identity
such that if $g$ is in $V$,
then $Y$, $b g^{-1} b^{-1} \cdot Y$, and $b^2 g^{-1} b^{-2} \cdot Y$ have total measure less than that of $X$.
Now if $x$ is outside these sets and \(g \in V \cap \Gamma_a\),
we have that $\phi(x)$, $\phi(b g b^{-1} \cdot x)$, and $\phi(b^2 g b^{-2} \cdot x)$
are each $1/2$, contradicting that there sum is at most 1.
It must therefore be that $Y$ has positive measure.

Let $Y_a = \{y \in Y : \phi(y) >  1/2\}$ and $Y_b = \{y \in Y : \phi(y) < 1/2\}$.
Since $Y$ has positive measure, either $Y_a$ or $Y_b$ have positive measure.
If $Y_a$ has positive measure, then by Proposition \ref{overlap} there is a $g$
in $\Gamma_b$ such that $(g \cdot Y_a) \cap Y_a$ has positive measure and in particular
is nonempty.
This contradicts our observation that if $\phi(y) > 1/2$ and $g$ is in $\Gamma_b$, then
$\phi(g \cdot y) < 1/2$.
Similarly, if $Y_b$ has positive measure, one obtains a contradiction by finding
a $g$ in $\Gamma_a$ such that $g \cdot Y_b$ intersects $Y_b$.
It must be, therefore, that the orbit equivalence relation is nonamenable.
\end{proof}

We finish this section with a simple but powerful observation of Monod.
\begin{example} \cite{pw_proj_homeo}
If $A$ is a countable dense subring of $\R$, let $H(A)$ denote the group consisting of all
orientation preserving homeomorphisms of $P^1(\R)$ which fix the point at infinity and which
are piecewise $\PSL_2(A)$.
Suppose that $\alpha$ is in $\PSL_2(A)$ and that $\alpha$ does not fix $\infty$.
As a fractional linear transformation, the graph of $\alpha$ is a hyperbola.
If $r \in \R$ is sufficiently large in magnitude, then
$\alpha(t) = t + r$ has two solutions $a < b$; set
\[
\alpha_r(t) =
\begin{cases}
\alpha(t) & \textrm{ if } a < t < b \\
t+r & \textrm{ otherwise}.
\end{cases}
\]
If $r$ is moreover an integer,
then $\alpha_r$ is in $H(A)$.
It follows that the restriction of the orbit equivalence relation of $\PSL_2(A)$ to $\R$
coincides with the corresponding restriction of the orbit equivalence relation of $H(A)$'s action
on $\R$.
Thus, by Theorem \ref{amenEq_equivs} and the results of \cite{GhysCarriere} mentioned above,
$H(A)$ is nonamenable whenever $A$ is a dense subring of $\R$.
\end{example}

\begin{example} \cite{vN_fp}
Let $P(\Z)$ denote the subgroup of $H(\Z)$ consisting of those elements which have
a continuous derivative.
By unpublished work of Thurston, $P(\Z)$ is isomorphic to Richard Thompson's group $F$.
In fact
\[
\alpha(t) = t+1 \qquad \qquad
\beta(t) = \begin{cases}
t & \textrm{ if } t \leq 0 \\
\frac{t}{1-t} & \textrm{ if } 0 \leq t \leq \frac{1}{2} \\
3- \frac{1}{t} & \textrm{ if } \frac{1}{2} \leq t \leq 1 \\
t+1 & \textrm{ if } 1 \leq t
\end{cases}
\]
is the standard set of generators with respect to the usual finite presentation of $F$
(see \cite{vN_fp}).
It is not difficult to see that the orbit equivalence relation of $P(\Z)$'s action on $P^1(\R)$ coincides
with that of $\PSL_2(\Z)$ except for the point at infinity.
Since $\PSL_2(\Z) \cup \{t \mapsto t + 1/2\}$ generates $\PSL_2(\Z[1/2])$,
it follows that $\langle t \mapsto t/2, \beta \rangle$ is nonamenable.
\end{example}

The previous example is less relevant to the amenability problem for $F$, however, than
it might initially appear.
For instance, Lodha has shown that if $\Gamma$ is any subgroup of $H(\R)$ which is isomorphic to
$F$, then the orbit equivalence relation of $\Gamma$'s action on $\R$ is $\lambda$-amenable
where $\lambda$ is Lebesgue measure.
It is unclear whether this is true if $\Gamma$ is only assumed to be a subgroup of
the homeomorphism group of $\R$.

\section{Closure properties of amenable equivalence relations}

\label{closure_sec}

One of the most basic facts about amenable groups is that they are closed under taking subgroups, extensions,
and directed unions.
These operations have their analogs in the setting of countable Borel equivalence relations as well.
Notice that if $H \leq G$, then $E^H_X \subseteq E^G_X$ whenever $G$ acts on a standard Borel space.
Also, if $G$ is an increasing union of a sequence of subgroups $G_n$ $(n < \infty)$, then
$E^G_X = \bigcup_n E^{G_n}_X$.
Since the property of being $\mu$-hyperfinite is clearly inherited to subequivalence relations, we have
the following corollary of Theorem \ref{amenEq_equivs}.
 
\begin{prop}
If $E$ is a subequivalence relation of a $\mu$-amenable equivalence relation
is $\mu$-amenable.
\end{prop}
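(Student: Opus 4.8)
The plan is to route through the equivalence between $\mu$-amenability and $\mu$-hyperfiniteness recorded in Theorem~\ref{amenEq_equivs}, rather than to argue from the definition of $\mu$-amenability directly. A direct attack is awkward: if $x \mapsto \nu_x$ witnesses $\mu$-amenability of the ambient relation $F$, the obvious move is to replace $\nu_x$ by its conditioning on the smaller class $[x]_E$, but $\nu_x([x]_E)$ may be $0$ --- the $\nu_x$ are only finitely additive and $[x]_E$ is a countable set --- so there is no canonical renormalization available. Hyperfiniteness, on the other hand, is visibly inherited by subequivalence relations, which is the point flagged in the paragraph preceding the proposition.

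So let $E \subseteq F$ with $F$ a $\mu$-amenable countable Borel equivalence relation on $X$. First observe that $E$ is itself a countable Borel equivalence relation: it is Borel by hypothesis, and each $E$-class is contained in an $F$-class, hence countable. By Theorem~\ref{amenEq_equivs}, $(1)\Rightarrow(2)$, there is a $\mu$-null set $Y \subseteq X$ such that $F \restriction (X \setminus Y)$ is hyperfinite; fix Borel equivalence relations $F_n$ with finite classes, $F_n \subseteq F_{n+1}$, and $\bigcup_n F_n = F \restriction (X \setminus Y)$.

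Put $E_n = E \cap F_n$. Each $E_n$ is Borel, being an intersection of Borel sets; is an equivalence relation, being an intersection of equivalence relations; and has finite classes, since every $E_n$-class is contained in an $F_n$-class. The sequence $(E_n)$ is increasing, and using $E \subseteq F$ we get
\[
\bigcup_n E_n \;=\; E \cap \bigcup_n F_n \;=\; E \cap (F \restriction (X\setminus Y)) \;=\; E \restriction (X \setminus Y).
\]
Thus $E \restriction (X\setminus Y)$ is hyperfinite, i.e.\ $E$ is $\mu$-hyperfinite, and Theorem~\ref{amenEq_equivs}, $(2)\Rightarrow(1)$, then yields that $E$ is $\mu$-amenable. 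I do not expect any genuine obstacle here: the only thing to check is the routine bookkeeping that $E \cap F_n$ is again Borel with finite classes, and the whole argument hinges on being willing to pass through Theorem~\ref{amenEq_equivs} instead of manipulating the families $\nu_x$ by hand.
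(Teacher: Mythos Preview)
Your argument is correct and is exactly the approach the paper has in mind: the proposition is stated there as an immediate corollary of Theorem~\ref{amenEq_equivs}, using only that $\mu$-hyperfiniteness passes to subequivalence relations, which is precisely what you spell out by intersecting the $F_n$ with $E$. Your remarks about the difficulty of a direct argument via the measures $\nu_x$ are also on point, though the paper does not comment on this.
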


While there is no natural notion of extension in the setting of equivalence relations, 
it is easy to formulate what is meant by a product of equivalence relations.
It is straightforward to verify the following analog of the closure of the class of amenable groups under
taking products.

\begin{prop}
Products of $\mu$-amenable equivalence relations are amenable with respect to the corresponding product measure.
\end{prop}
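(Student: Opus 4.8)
The plan is to reduce to the case of a product of two factors and then quote Theorem~\ref{amenEq_equivs}. A finite product $E_1 \times \cdots \times E_n$ is obtained by taking products two factors at a time, so it suffices to treat the following situation: $E$ is a $\mu$-amenable equivalence relation on $X$, $F$ is a $\nu$-amenable equivalence relation on $Y$, and we wish to show that the product relation $E \times F$ on $X \times Y$ --- where $(x,y) \mathrel{(E \times F)} (x',y')$ means $x \mathrel E x'$ and $y \mathrel F y'$ --- is amenable with respect to $\mu \times \nu$. (Note that $E \times F$ is again a countable Borel equivalence relation, since each class $[x]_E \times [y]_F$ is a product of two countable sets.)

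By the equivalence of (1) and (3) in Theorem~\ref{amenEq_equivs}, fix a countable amenable group $G$ together with a $\mu$-measurable action of $G$ on $X$ such that $E = E^G_X$, and similarly a countable amenable group $H$ with a $\nu$-measurable action on $Y$ such that $F = E^H_Y$. Let $G \times H$ act on $X \times Y$ coordinatewise, so that $(g,h) \cdot (x,y) = (g \cdot x, h \cdot y)$. Two points of $X \times Y$ lie in the same $G \times H$-orbit exactly when their first coordinates are $G$-equivalent and their second coordinates are $H$-equivalent; that is, $E^{G \times H}_{X \times Y} = E \times F$.

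Two points then remain. First, the coordinatewise action is $(\mu \times \nu)$-measurable: given $(g,h) \in G \times H$, the map $g \times h$ agrees, off a $(\mu \times \nu)$-null set, with the product of Borel representatives of $x \mapsto g \cdot x$ and $y \mapsto h \cdot y$ (here one uses $\sigma$-finiteness to see that the product of a null set with the whole space is null), and hence is measurable with respect to the completion of $\mu \times \nu$. Second, $G \times H$ is amenable, being a product of two amenable groups. The implication (3) $\Rightarrow$ (1) of Theorem~\ref{amenEq_equivs}, applied to the $G \times H$-action, now yields that $E \times F$ is $(\mu \times \nu)$-amenable.

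There is no genuinely hard step; the only place calling for any care is the measurability of the product action, and there the subtlety is merely the familiar one of moving between Borel and completed-Borel measurability. One could instead argue directly, letting the finitely additive probability measure attached to $(x,y)$ be an appropriate product of the measures furnished by the $\mu$- and $\nu$-amenability of $E$ and $F$; but constructing the product of two finitely additive measures and verifying that the resulting assignment is measurable is fiddlier than the reduction above, so the route through Theorem~\ref{amenEq_equivs} is to be preferred.
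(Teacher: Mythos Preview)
Your argument is correct. Note, however, that the paper does not actually give a proof of this proposition: it merely says that the statement is ``straightforward to verify'' and moves on. So there is nothing to compare against directly.

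That said, it is worth observing that a slightly more elementary route is available, using the equivalence with $\mu$-hyperfiniteness (item (2) of Theorem~\ref{amenEq_equivs}) rather than with item (3). If $E$ is $\mu$-hyperfinite, write $E \restriction (X \setminus N) = \bigcup_n E_n$ for a $\mu$-null $N$ and an increasing sequence of finite Borel equivalence relations $E_n$; similarly $F \restriction (Y \setminus M) = \bigcup_n F_n$. Then off the $(\mu \times \nu)$-null set $(N \times Y) \cup (X \times M)$ one has $E \times F = \bigcup_n (E_n \times F_n)$, and each $E_n \times F_n$ is a finite Borel equivalence relation. This avoids both the measurability discussion for the product action and the appeal to closure of amenable groups under products, and is probably closer to what the paper has in mind when it calls the verification ``straightforward.'' Your route through item (3) is perfectly fine, though, and your remarks about the direct product-of-finitely-additive-measures approach being fiddlier are apt.
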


The following is the corresponding analog of the amenability of increasing unions of amenable groups.

\begin{thm} \cite{MPT1:Dye} \cite{non-sing_trans_meas}\label{dye_krieger}
Suppose that $E_n$ $(n < \infty)$ is an increasing sequence of countable Borel equivalence relations on 
a standard Borel space $X$.
If $\mu$ is a standard measure on $X$ and each $E_n$ is $\mu$-hyperfinite, then $\bigcup_n E_n$ is
$\mu$-hyperfinite.
\end{thm}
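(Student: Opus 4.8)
The plan is to reduce the statement to the assertion that $\mu$\emph{-amenability} (rather than $\mu$-hyperfiniteness) is closed under increasing unions. Indeed, by Theorem \ref{amenEq_equivs} each $E_n$ is $\mu$-amenable, and once $E := \bigcup_n E_n$ is shown to be $\mu$-amenable, a second application of Theorem \ref{amenEq_equivs} gives that $E$ is $\mu$-hyperfinite. This is essentially the modern reorganization of the arguments of \cite{MPT1:Dye} and \cite{non-sing_trans_meas}, with the hard analytic work absorbed into Theorem \ref{amenEq_equivs} (i.e. into \cite{ConnesFeldmanWeiss}).

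To work with $\mu$-amenability effectively I would first pass to the equivalent Reiter-type reformulation: a countable Borel equivalence relation $F$ on $(X,\mu)$ is $\mu$-amenable if and only if there are Borel maps $q_k \colon F \to [0,1]$ with $\sum_{y\, F\, x} q_k(x,y) = 1$ for all $x$ and such that for $\mu$-a.e. $x$ and all $y$ with $x\, F\, y$ one has $\sum_z |q_k(x,z) - q_k(y,z)| \to 0$ as $k \to \infty$. This is the analog for equivalence relations of the equivalence of the F\o{}lner and Reiter conditions for groups; it is standard and can be quoted from \cite{KechrisMiller}. The point is that a Reiter sequence is an honest sequence of Borel real-valued functions, so it can be manipulated measure-theoretically in ways an invariant mean cannot.

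Now fix, for each $n$, a Reiter sequence $(q^{n,k})_k$ for $E_n$ with $\mu$-null exceptional set $N_n$, and write $E = \bigcup_i \operatorname{graph}(\gamma_i)$ for countably many Borel partial injections $\gamma_i$ (Feldman--Moore, or Lusin--Novikov uniformization). Set $A_{i,m} = \{x \in \operatorname{dom}(\gamma_i) : (x,\gamma_i x) \in E_m\}$, so that $A_{i,m}$ increases to $\operatorname{dom}(\gamma_i)$ in $m$. For each fixed $n$ the Borel functions $x \mapsto \max\{\,\sum_z |q^{n,k}(x,z) - q^{n,k}(\gamma_i x,z)| : i \le n,\ x \in A_{i,n}\,\}$ converge to $0$ pointwise off $N_n$ as $k \to \infty$, so by Egorov's theorem (applied on finite-measure pieces of the $\sigma$-finite space $X$ and then diagonalized) one can choose an index $\phi(n)$ making this maximum $< 2^{-n}$ off a Borel set $B_n$ of measure $< 2^{-n}$. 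Put $p_n := q^{n,\phi(n)}$, viewed via $[x]_{E_n} \subseteq [x]_E$ as a probability measure on $[x]_E$. I claim $(p_n)_n$ is a Reiter sequence for $E$ off the $\mu$-null set $N := \bigl(\bigcup_n N_n\bigr) \cup \limsup_n B_n$ (null by Borel--Cantelli): given $x \notin N$ and $y\, E\, x$ with $y \ne x$, choose the least $i$ with $x \in \operatorname{dom}(\gamma_i)$ and $\gamma_i x = y$; then for all sufficiently large $n$ one simultaneously has $i \le n$, $x \in A_{i,n}$, $x \notin B_n$ and $x \notin N_n$, whence $\sum_z |p_n(x,z) - p_n(y,z)| < 2^{-n} \to 0$. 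Hence $E$ is $\mu$-amenable, and Theorem \ref{amenEq_equivs} completes the proof, modulo the usual routine massaging of null sets needed to pass between the ``$\mu$-a.e.'' formulation above and the literal everywhere-formulation of $\mu$-amenability.

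The main obstacle here is not the diagonalization --- which is the equivalence-relation shadow of the triviality that amenability of groups passes to increasing unions --- but the Reiter reformulation used as input, whose converse direction rests on a genuine weak-$*$ compactness and approximation argument; this is precisely the content that Theorem \ref{amenEq_equivs} and the literature behind it supply. A more self-contained alternative, following \cite{MPT1:Dye} and \cite{non-sing_trans_meas} more literally, would avoid amenability altogether and instead build an increasing sequence of finite subequivalence relations exhausting $E$ by a back-and-forth construction, absorbing one Borel generator of $E$ at each stage by exploiting the hyperfinite structure of the $E_n$; the bookkeeping is of comparable weight.
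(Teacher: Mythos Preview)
The paper does not supply a proof of this theorem; it is stated with citations to Dye and Krieger and left at that. Your sketch is a correct rendition of the standard modern argument --- route through $\mu$-amenability via Theorem~\ref{amenEq_equivs}, use the Reiter characterization, and diagonalize over the $E_n$ with Egorov and Borel--Cantelli --- and is essentially the proof one finds in \cite{KechrisMiller}. One cosmetic simplification: since $\mu$-hyperfiniteness and $\mu$-amenability depend only on the measure class of $\mu$, you may replace the $\sigma$-finite $\mu$ by an equivalent probability measure at the outset; this makes the Egorov step clean and removes the need to ``apply on finite-measure pieces and then diagonalize.'' Beyond that the argument is sound, and your closing remark correctly locates the real analytic content in the Reiter reformulation, which is precisely what the literature behind Theorem~\ref{amenEq_equivs} supplies.
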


The power of the closure properties mentioned in this section is that they afford some flexibility which
has no analog in the algebraic setting.
For instance, while the equivalence relations $E_n$ in the previous theorem are required to be nested,
they need not come from a nested sequence of groups.
It is also sometimes fruitful to generate equivalence relations with partial homeomorphisms rather than
full automorphisms of an underlying space.

\begin{example}
Consider the following homeomorphisms of $P^1(\R) =\R \cup \{\infty\}$:
$\alpha(t) = t+1/2$ and $\beta(t) = -1/t$.
For $0 < r < \infty$, define $\alpha_r$ to be the restriction of $\alpha$ to
$[-r,-1/r]$.
Let $E$ be the equivalence relation generated by
$\alpha$ and $\beta$ and
$E_r$ be the equivalence relation generated by $\beta$ and $\alpha_r$
(i.e. $E_r$ is the smallest equivalence relation such that for all $t$,
$(t,t+1/2) \in E_r$ and if additionally $-r \leq t \leq -1/r$, then
$(t,-1/t) \in E_r$).
Notice that if $0 < r < s < \infty$, then $E_r \subseteq E_s \subseteq E$ and
that $E = \bigcup_{r > 0} E_r$.
Thus there is an $r$ such that $0 < r < \infty$ such that
$E_r$ is nonamenable.
In fact a more careful analysis reveals that $E_{4} = E$, although this is
not relevant for the point we wish to illustrate here.
\end{example}

\begin{example}
Suppose that $\alpha$ and $\beta$ are homeomorphisms
such that the action of $\langle \alpha, \beta\rangle$ on $\R$
generates a nonamenable equivalence relation.
Suppose further that $\alpha_n$ $(n < \infty)$ and $\beta_n$ $(n < \infty)$
are sequences of homeomorphisms such that for all but countably many $t$,
$\alpha_n(t) = \alpha(t)$ and $\beta_n(t) = \beta(t)$ holds for all but finitely many $n$.
It follows that there exists an $n$ such that the action of
$\langle \alpha_n,\beta_n \rangle$ on $\R$
generates a nonamenable equivalence relation.
To see this, let $X_n$ denote the set of all $t$ in $\R$ such that
for all $k \geq n$, $\alpha_k(t) = \alpha(t)$ and $\beta_k(t) = \beta(t)$.
Define $E_n$ to be the equivalence relation generated by the restrictions of
$\alpha_n$ and $\beta_n$ to $X_n$.
It follows that $E_n$ $(n < \infty)$ is an increasing sequence of countable 
Borel equivalence relations which, off a countable subset of $\R$, unions to
the equivalence relation generated by $\alpha$ and $\beta$.
The claim now follows from Theorem \ref{dye_krieger}.
\end{example}


\begin{thebibliography}{10}

\bibitem{dense_free_subgrp}
E.~Breuillard and T.~Gelander.
\newblock On dense free subgroups of {L}ie groups.
\newblock {\em J. Algebra}, 261(2):448--467, 2003.

\bibitem{ConnesFeldmanWeiss}
A.~Connes, J.~Feldman, and B.~Weiss.
\newblock An amenable equivalence relation is generated by a single
  transformation.
\newblock {\em Ergodic Theory Dynamical Systems}, 1(4):431--450 (1982), 1981.

\bibitem{topics_geo_grp}
P. de~la Harpe.
\newblock {\em Topics in geometric group theory}.
\newblock Chicago Lectures in Mathematics. University of Chicago Press,
  Chicago, IL, 2000.
 
\bibitem{DJK}
R.~Dougherty, S.~Jackson, and A.~S. Kechris.
\newblock The structure of hyperfinite {B}orel equivalence relations.
\newblock {\em Trans. Amer. Math. Soc.}, 341(1):193--225, 1994.
 
\bibitem{MPT1:Dye}
H.~A. Dye.
\newblock On groups of measure preserving transformation. {I}.
\newblock {\em Amer. J. Math.}, 81:119--159, 1959.

\bibitem{FeldmanMoore}
J. Feldman and C.~C. Moore.
\newblock Ergodic equivalence relations, cohomology, and von {N}eumann
  algebras. {I}.
\newblock {\em Trans. Amer. Math. Soc.}, 234(2):289--324, 1977.

\bibitem{ICM:Gaboriau}
D. Gaboriau.
\newblock Orbit equivalence and measured group theory.
\newblock In {\em Proceedings of the {I}nternational {C}ongress of
  {M}athematicians. {V}olume {III}}, pages 1501--1527. Hindustan Book Agency,
  New Delhi, 2010.

\bibitem{abelian_hyp}
S.~Gao and S. Jackson.
\newblock Countable abelian group actions and hyperfinite equivalence
  relations.
\newblock {\em Inventiones Mathematicae}, 201(1):309--383, 2015.

\bibitem{GhysCarriere}
{\'E}. Ghys and Y. Carri{\`e}re.
\newblock Relations d'\'equivalence moyennables sur les groupes de {L}ie.
\newblock {\em C. R. Acad. Sci. Paris S\'er. I Math.}, 300(19):677--680, 1985.

\bibitem{JKL}
S.~Jackson, A.~S. Kechris, and A.~Louveau.
\newblock Countable {B}orel equivalence relations.
\newblock {\em J. Math. Log.}, 2(1):1--80, 2002.

\bibitem{set_theory:Kechris}
A.~S. Kechris.
\newblock {\em {C}lassical {D}escriptive {S}et {T}heory}, volume 156 of {\em
  Graduate Texts in Mathematics}.
\newblock Springer-Verlag, New York, 1995.

\bibitem{KechrisMiller}
A.~S. Kechris and B.~D. Miller.
\newblock {\em Topics in orbit equivalence}, volume 1852 of {\em Lecture Notes
  in Mathematics}.
\newblock Springer-Verlag, Berlin, 2004.

\bibitem{non-sing_trans_meas}
W. Krieger.
\newblock On non-singular transformations of a measure space. {I}, {II}.
\newblock {\em Z. Wahrscheinlichkeitstheorie und Verw. Gebiete 11 (1969),
  83-97; ibid.}, 11:98--119, 1969.

\bibitem{vN_fp}
Y. Lodha and J.~Tatch Moore.
\newblock A nonamenable finitely presented group of piecewise projective homeomorphisms.
\newblock {\em Groups, Geometry, and Dynamics}, 10(1):177--200, 2016.

\bibitem{vN_Finfty}
Y. Lodha.
\newblock A type $F_\infty$ group of piecewise projective homeomorphisms.
\newblock Archive preprint arXiv:1408.3127.

\bibitem{det_Borel_comb}
Andrew Marks.
\newblock A determinacy approach to Borel combinatorics.
\newblock {\em J. Amer. Math. Soc.}, 29(2):579--600, 2016.

\bibitem{pw_proj_homeo}
N. Monod.
\newblock Groups of piecewise projective homeomorphisms.
\newblock {\em Proc. Natl. Acad. Sci. USA}, 110(12):4524--4527, 2013.

\bibitem{loc_nil_hyp}
B. Seward and S. Schneider.
\newblock Locally nilpotent groups and hyperfinite equivalence relations.
\newblock ArXiv preprint arXiv:1308.5853, 2013.

\bibitem{SlamanSteel}
T.~A. Slaman and J.~R. Steel.
\newblock Definable functions on degrees.
\newblock In {\em Cabal {S}eminar 81--85}, volume 1333 of {\em Lecture Notes in
  Math.}, pages 37--55. Springer, Berlin, 1988.

\bibitem{meas_dyn:Weiss}
B. Weiss.
\newblock Measurable dynamics.
\newblock In {\em Conference in modern analysis and probability ({N}ew {H}aven,
  {C}onn., 1982)}, volume~26 of {\em Contemp. Math.}, pages 395--421. Amer.
  Math. Soc., Providence, RI, 1984.

\end{thebibliography}

\def\Dbar{\leavevmode\lower.6ex\hbox to 0pt{\hskip-.23ex \accent"16\hss}D}

\end{document}